\DeclareMathOperator{\interior}{int}
\DeclareMathOperator{\TG}{TG}
\DeclareMathOperator{\F}{F}
\tikzstyle{none}=[inner sep=0pt]
\definecolor{hexcolor0xf81e1c}{rgb}{0.973,0.118,0.110}
\definecolor{hexcolor0x3c00ff}{rgb}{0.235,0.000,1.000}
\tikzstyle{vertex}=[circle, fill=white,draw=black, scale=0.55]
\tikzstyle{whitevertex}=[circle,fill=white,draw=black, scale = 0.5]
\tikzstyle{redvertex}=[circle,fill=hexcolor0xf81e1c,draw=black, scale = 0.5]
\tikzstyle{bluevertex}=[circle,fill=hexcolor0x3c00ff,draw=black, scale = 0.5]
\tikzstyle{greenvertex}=[circle,fill=green,draw=black, scale=0.5]
\tikzstyle{purplevertex}=[circle,fill=magenta,draw=black, scale=0.5]
\tikzstyle{yellowvertex}=[circle,fill=yellow,draw=black, scale=0.5]
\tikzstyle{grayvertex}=[circle,fill=gray,draw=black, scale=0.5]
\tikzstyle{blackvertex}=[circle,fill=black,draw=black, scale=0.5]
\tikzstyle{cyanvertex}=[circle,fill=cyan,draw=black, scale=0.5]
\tikzstyle{textbox}=[rectangle,fill=none,draw=none]
\tikzstyle{box}=[rectangle,fill=none,draw=black]
\tikzstyle{arc}=[black, ->]
\tikzstyle{grayarc}=[gray, ->]
\tikzstyle{bluearc}=[blue, ->]
\tikzstyle{grayedge}=[draw=gray]
\tikzstyle{blueedge}=[draw=blue, thick]
\tikzstyle{rededge}=[draw=red, thick]
\tikzstyle{purpleedge}=[draw=magenta, very thick]
\tikzstyle{greenedge}=[draw=green, thick]
\tikzstyle{blackedge}=[draw=black, thick]
\tikzstyle{dashededge}=[draw=black, dashed]
\tikzstyle{edge}=[draw=black]
\newcommand{\midarrow}{\tikz \draw[-triangle 90] (0,0) -- +(.1,0);}
\theoremstyle{plain}
\newtheorem{condition}{Condition}
\newtheorem{remark}{Remark}
\title{Entropy bounds for Glass networks}
\author{Benjamin W. Wild\thanks{Dept. of Mathematics and Statistics, University of Victoria, P.O. Box 1700 STN CSC, Victoria, B.C., Canada, V8W 2Y2 (\email{benwwild@uvic.ca}).}\and  Roderick Edwards\thanks{Dept. of Mathematics and Statistics, University of Victoria, P.O. Box 1700 STN CSC, Victoria, B.C., Canada, V8W 2Y2 (\email{edwards@uvic.ca}).}}
\begin{document}
\maketitle

\begin{abstract}
We develop a procedure here to calculate good upper bounds on the entropy of Glass networks (a class of piecewise-linear Ordinary Differential Equations), by means of symbolic representations of the continuous dynamics. Our method improves on a result by Farcot (2006), and allows in principle arbitrary refinements of the estimate, and we show that in the limiting case, these estimates converge to the true entropy of the symbolic system corresponding to the continuous dynamics. As a check on the method, we demonstrate for an example network that our upper bound after only a few refinement steps is very close to the entropy estimated from a long numerical simulation. This procedure could be applied to estimating entropy of free-running electronic circuits built from standard Boolean logic gates, which can be modeled by Glass networks.
\end{abstract}

\begin{keywords}
Glass networks, entropy, symbolic dynamics, chaos, piecewise smooth ODEs, dynamical systems
\end{keywords}

\begin{MSCcodes}
34A36, 34C28, 37B10, 37B40, 94C30
\end{MSCcodes}

\section{Introduction} \label{sec:intro}

Glass networks are a class of piecewise-linear systems of ordinary differential equations (ODEs), which were originally proposed as qualitative models of gene regulation~\cite{EdwardsGlass2006,Glass1975a,Glass1975b,Glass1977,Glass1985,GlassEdwards2018,GlassKauffman1973}, but have also been implemented in electronic circuitry~\cite{Farcot2019, Glass2005, Luo2020} where the sharpness of the switching between high and low states makes the model a more accurate approximation of such circuit behaviour than of typical gene networks where switching may not be so steep. Demonstration of chaotic, or at least aperiodic behaviour in low-dimensional examples of such networks has been shown in several ways~\cite{Edwards2000, Edwards2001, Edwards2012, LiYang2006, Mestl1996}, and there is a renewed interest in establishing such complex behaviour because of their potential application as robust True Random Number Generators (TRNGs)~\cite{Farcot2019}.

Entropy is one measure of complexity in a dynamical system~\cite{Adler1965}. It is particularly relevant in the context of TRNGs, since the objective in that situation is to generate sequences of bits with positive entropy. For Glass networks, entropy estimates have been discussed by Glass et al.~\cite{ESAG2001} and more rigorously by Farcot~\cite{Farcot2006}. Farcot obtains an upper bound on entropy of the dynamics of a Glass network, by means of the entropy of a Transition Graph (TG) associated with the equations. The $\TG$ captures the transitions between states (rectangular regions of phase space, called {\em boxes}) that are allowed by the structure of the equations. The entropy of this $\TG$ is shown, using a corresponding symbolic dynamics, to be an upper bound for the actual dynamics of the network. However, it is often not a very sharp upper bound, since the $\TG$ typically allows a great many more transitions than actually occur in a trajectory of the differential equations, especially if the latter is restricted to an attractor ({\em i.e.}, ignoring any transient behaviour), which is the pertinent object for the behaviour of a circuit acting as a TRNG.

Here we show how Farcot's approach to producing an upper bound on entropy for a Glass network can be improved, in some cases dramatically improved, by taking into consideration more information about the actual dynamics of the network. As a first step, the $\TG$ can be pruned to remove transitions that do not occur on the attractor of interest. Periodic attractors in Glass networks can often be identified directly, and proven to exist and to be asymptotically attracting. Of course, they must have zero entropy, but one can also sometimes prove that no such stable periodic orbit exists. A trapping region on a {\em wall} between boxes, serving as a Poincar\'e section, can often be identified, even when there is no stable periodic orbit, which restricts trajectories to a certain subgraph of the $\TG$. Furthermore, it is often possible to refine the entropy estimate further by identifying sequences of cycles on the Poincar\'e section that do not occur, and thus correspond to forbidden blocks in the corresponding symbolic dynamics. We use a particular Glass network as a test example, but the methodology is applicable to the entire class of networks considered.

In Section~\ref{sec:glassnet}, we introduce briefly Glass networks and the analysis of their dynamics, mainly following the notation of Farcot~\cite{Farcot2006}. In Section~\ref{sec:TGentropy} we outline Farcot's proof that the entropy of the $\TG$ is an upper bound for the entropy of the network dynamics. Then in Section~\ref{sec:improved_bound} we adapt this approach to show how improved upper bounds may be obtained. In Section~\ref{sec:example} we apply this analysis to a particular Glass network. Finally, in Section~\ref{sec:numerics} we show by means of numerical simulations how much closer our upper bound is to numerical estimates of entropy than the entropy based on the entire $\TG$.

\section{Glass Networks} \label{sec:glassnet}

\subsection{General theory}\label{sec:glassnet_general}
Glass networks are of the form 
\begin{equation} \label{eq:GlassNet}
\frac{dx}{dt}=-\Lambda x+\Gamma(x)\,,
\end{equation}
where $x\in\mathbb{R}^n$, $\Lambda\in\mathbb{R}^{n\times n}$ is a positive diagonal matrix with diagonal entries $\lambda_{i}$, $i=1,\ldots,n$, and $\Gamma:\mathbb{R}^n\rightarrow\mathbb{R}^n$ is constant on each of a finite number of rectangular regions that partition phase space defined by thresholds, as detailed below. The following background on the structure and analysis of Glass networks closely follows Farcot~\cite{Farcot2006} since this work builds upon what is discussed there.

Since $\Gamma$ takes only a finite number of values, $\Gamma$ is bounded and so is the flow. The dynamics of interest occur within the $n$-dimensional rectangular region $$\mathcal{U}=\prod_{i=1}^n\left[\min_{x\in{\mathbb{R}^n}}\left\{\frac{\Gamma_i(x)}{\lambda_{i}}\right\},\,\,\max_{x\in{\mathbb{R}^n}}\left\{\frac{\Gamma_i(x)}{\lambda_{i}}\right\}\right]\,,$$
which is positively invariant. 
The flow from any (non-negative) initial point outside $\mathcal{U}$ must flow into it and cannot then escape. Note that Farcot~\cite{Farcot2006}, defines $\mathcal{U}$ as $[0,1]^n$, but this would only be true after a normalization that we do not make. Since $\Lambda$ is a constant diagonal matrix, Equation~\eqref{eq:GlassNet} only describes systems with linear degradation rates. Thus, we do not here include systems with coupling in the degradation term. $\mathcal{U}$ is partitioned by the piecewise definition of the coupling term, $\Gamma$, which is constant on the interior of {\it boxes}, $n$-dimensional rectangular regions that are products of bounded intervals. Thus, in each box Equation~\eqref{eq:GlassNet} is a first order linear system. 

The set of boundary values are denoted by $\Theta_i=\{\theta_{i,j}\,|\,j\in \{0,...,p_i\}\}$ for each coordinate $x_i$ of $x$. Like Farcot~\cite{Farcot2006}, we will assume that the sets $\Theta_i$ are ordered $\theta_{i,0}<\theta_{i,1}<...<\theta_{i,p_i-1}<\theta_{i,p_i}$, where $\theta_{i,0}=\min_{x\in{\mathbb{R}^n}}\left\{{\Gamma_i(x)}/{\lambda_{i}}\right\}$ and $\theta_{i,p_i}=\max_{x\in{\mathbb{R}^n}}\left\{{\Gamma_i(x)}/{\lambda_{i}}\right\}$, and the set of {\it threshold} values is $\{\theta_{i,j}\,|\,j\in \{1,...,p_i-1\}\}$.
Boxes can then be written as 
\begin{equation} \label{eq:boxes}
B_a=B_{a_1...a_n}=\prod_{i=1}^n[\theta_{i,a_i},\theta_{i,a_i+1}]\,,
\end{equation}
where $a$ belongs to the finite set
\begin{equation} 
\mathcal{A}=\prod_{i=1}^n\{0,...,p_i-1\}\,.
\label{eq:boxalphabet}
\end{equation}
The boundary values define the boundaries of a finite number of boxes in phase space, on the interior of each of which $\Gamma$ is constant. These boundaries in phase space are referred to as {\it walls}. 
Since the subscript $a$ uniquely determines a box, $\mathcal{A}$ defines an alphabet of symbols that will later be used in symbolic dynamical systems that qualitatively describe the dynamics of System~\eqref{eq:GlassNet}. Since $\Gamma$ is constant on the interior of a box, it will also be convenient later to consider the mapping $\Gamma$ as $\Gamma:\mathcal{A}\rightarrow\mathbb{R}^n$. ($\Gamma$ should really be considered as the composition of a map from $\mathbb{R}^n$ to $\cal A$ and a map from $\cal A$ to $\mathbb{R}^n$, but it will not cause confusion to abuse notation and refer to the latter map as $\Gamma$).

Equation~\eqref{eq:GlassNet} can be solved within a given box $B_a$, where $\Gamma$ is a constant vector and the system is first order linear, to give:
\begin{equation}\label{eq:boxflow}
x(t)=f(a)+e^{-\Lambda t}(x(0)-f(a))\,,
\end{equation}
where $f=\Lambda^{-1}\circ \Gamma=\left[\Gamma_1/\lambda_{1},...,\Gamma_n/\lambda_{n}\right]^\top$, and $f(a)$ is referred to as a focal point for the box $B_a$. The flow is clearly attracted towards $f(a)$ (in each variable). If $f(a)\in B_a$, $f(a)$ is an asymptotically stable steady state. Otherwise, before the trajectory can reach $f(a)$, it will intersect a wall between $B_a$ and another box. If the wall is crossed then $\Gamma$ takes a new constant value (in general) and the trajectory continues. The boundary of $B_a$ is formed by $k$-faces that are $k$-dimensional rectangles where $k\in\{0,...,n-1\}$. When the trajectory crosses an $(n-1)$-dimensional face (a wall) there is at most one adjacent box the trajectory can continue in, and so the value of $\Gamma$ is unambiguous, unless the flow is towards the wall on both sides. In the latter case (a {\it black wall}), the wall is not crossed. Rather, the subsequent flow is constrained to the wall, and we need either Filippov theory for discontinuous systems, or singular perturbation theory to determine the flow. If the trajectory crosses a face that has dimension less than $n-1$ there are several possible adjacent boxes that the trajectory could continue into, and again Filippov solutions or singular perturbation is sometimes required. Here, we will restrict the discussion to walls, {\em i.e.}, faces of dimension $n-1$, that are actually crossed. These are called {\it transparent} walls. 

Continuous trajectories from wall crossing to wall crossing can be easily constructed, as long as walls are transparent. In order to ensure this, we will assume that the following two conditions are met.

\begin{condition}\label{cond1}
    $\forall\, a\in\mathcal{A},\: f(a)\in\bigcup_{b\in\mathcal{A}}\interior(B_b).$ 
\end{condition}
Here $f:\mathcal{A}\rightarrow \mathbb{R}^n$ gives the focal point for the box $B_a$, and $\text{int}(B_a)$ is the interior of $B_a$. This assumption ensures that no focal points are located on a box boundary. 

\begin{condition}\label{cond2}
    $\forall\, i\in \{1,...,n\}$, $\forall\, a,a'\in\mathcal{A}$ such that $a-a'=\pm \boldsymbol{e}_i,$ either
    $$\left(\boldsymbol{d}_i(f(a))-a_i\right)\left(\boldsymbol{d}_i(f(a'))-a'_i\right)>0,$$
    or
    $$\left(\boldsymbol{d}_i(f(a))-a_i\right)=0\quad\text{and}\quad\left(\boldsymbol{d}_i(f(a'))-a'_i\right)(a_i-a'_i)>0$$
    or 
    $$\left(\boldsymbol{d}_i(f(a'))-a'_i\right)=0\quad\text{and}\quad\left(\boldsymbol{d}_i(f(a))-a_i\right)(a_i-a'_i)>0\,,$$
    where $\boldsymbol{e}_i$ is the $i^{th}$ basis vector in $\mathbb{R}^n$ and $\boldsymbol{d}=(\boldsymbol{d}_1,...,\boldsymbol{d}_n):\bigcup_a\text{int}(B_a)\rightarrow\mathcal{A}$ is the discretizing operator, which maps a point lying inside a box to the subscript denoting the box.
\end{condition}
This Condition ensures that the flow on both sides of a wall between adjacent boxes is in the same direction, so that black (or white) walls cannot occur. Boxes $a$ and $a'$ are adjacent in the $x_i$ direction when $a-a'=\pm \boldsymbol{e}_i$, and the flow direction in box $a$ of variable $x_i$ is determined by the sign of $\left(\boldsymbol{d}_i(f(a))-a_i\right)$. In the context of gene regulation, Condition~\ref{cond2} is satisfied when there is no direct auto-regulation in the system.

{
Given a box $B_a$ and the solution trajectory defined by Equation~\eqref{eq:boxflow} within the box, if a wall is crossed, the time and position that the trajectory encounters the wall is easy to calculate. Which wall, if any, is encountered uniquely depends on the position of the focal point for the box. The wall $\{x_i=\theta_{i,a_i}\}$ can be crossed if and only if $f_i(a)<\theta_{i,a_i}$ and the wall $\{x_i=\theta_{i,a_i+1}\}$ can be crossed if and only if $f_i(a)>\theta_{i,a_i+1}$. The set of possible output walls, either on the upper or lower side in each direction, is thus captured by 
$$I_{\text{out}}(a)=I_{\text{out}}^+(a)\cup I_{\text{out}}^-(a)\,,$$
where $I_{\text{out}}^+(a)=\{i\in\{1,...,n\}\,|\,f_i(a)>\theta_{i,a_i+1}\}$ and 
$I_{\text{out}}^-(a)=\{i\in\{1,...,n\}\,|\,f_i(a)<\theta_{i,a_i}\}$. Note that if $i\in I_{\text{out}}^+(a)$ then $i\notin I_{\text{out}}^-(a)$ and vice versa. Now for each direction in $I_\text{out}(a)$, the time it takes for $x(t)$ to reach the threshold from an initial point, $x^{(0)}$ according to the flow of $B_a$ is given by
$$\tau_i(x^{(0)})=-\frac{1}{\lambda_i}\ln\left(\frac{f_i(a)-\theta_{i,a_i}}{f_i(a)-x^{(0)}_i})\right)\quad\text{if}\quad i\in I_{\text{out}}^-(a)$$
and
$$\tau_i(x^{(0)})=-\frac{1}{\lambda_i}\ln\left(\frac{f_i(a)-\theta_{i,a_i+1}}{f_i(a)-x^{(0)}_i})\right)\quad\text{if}\quad i\in I_{\text{out}}^+(a)\,.$$
Finally, the time to encounter a wall from $x\in B_a$ is given by
\begin{equation} 
\tau(x^{(0)})=\min_{i\in I_\text{out}(a)}\tau_i(x^{(0)})
\label{eq:boxtime}
\end{equation}
and putting this into Equation~\eqref{eq:boxflow} gives the exit point of $B_a$ from the initial condition $x^{(0)}\in B_a$. The starting point of a trajectory can be chosen to be on a wall, and then a the subsequent sequence of wall transitions can be calculated by repeating the above. It follows from Equation~\eqref{eq:boxflow} and Equation~\eqref{eq:boxtime} that the map from wall to wall is given by $\mathcal{M}_a:\partial B_a\rightarrow\partial B_a$ where 
\begin{equation}\label{eq:boxmap}      
\mathcal{M}_a x =x(\tau(x^{(0)}))=f(a)+e^{-\Lambda\tau(x^{(0)})}(x^{(0)}-f(a))\,.
\end{equation}

Within each box $B_a$, $I_\text{out}(a)$ determines all boxes that are reachable from $B_a$. These  boxes are adjacent to $B_a$ through walls supported by hyperplanes that take the form $\{x_i=\theta_{i,j}\}$, where $i\in I_\text{out}$ and $j\in\{a_i,a_i+1\}$  depends on whether $i$ belongs to $I_\text{out}^-(a)$ or $I_\text{out}^+(a)$. Using that distinction, walls can be denoted as 
$$W_i^+(a)=\{x\,|\,x_i=\theta_{i,a_i+1}\}\cap B_a\quad\text{and}\quad W_i^-(a)=\{x\,|\,x_i=\theta_{i,a_i}\}\cap B_a\,.$$
Each box can then be partitioned into regions associated with each element of $I_\text{out}(a)$ such that only a single adjacent box is reachable from each region. The only walls by which trajectories can escape $B_a$ are then $W_i^+(a)$ for $i\in I_\text{out}^+(a)$, and $W_i^-(a)$ for $i\in I_\text{out}^-(a)$. So, given any initial condition $x$ on a wall, the directions $i$ such that $\tau(x)=\tau_i(x)$ are those for which $\mathcal{M}_ax\in W_i^+(a)\cup W_i^-(a)$. Normally there is only one such direction and a particular wall through which we exit $B_a$, but it is possible that multiple walls are reached simultaneously ({\em i.e.}, we reach an intersection of threshold hyperplanes), in which case there is more than one such direction, $i$. Now $\partial B_a$ can be partitioned into two regions: 
$$\partial B_a^\text{out} =\left(\bigcup_{i\in I_{\text{out}}^+(a)}W_i^+(a) \right)\cup\left(\bigcup_{i\in I_{\text{out}}^-(a)}W_i^-(a) \right)=\{x\in B_a\, |\,\tau(x)=0\}$$
(points on the boundary of $B_a$ from which it takes no time at all to exit, {\em i.e.}, exit walls), and 
$$\partial B_a^\text{in}=\left(\bigcup_{i\in I_{\text{out}}^+(a)}W_i^-(a) \right)\cup\left(\bigcup_{i\in I_{\text{out}}^-(a)}W_i^+(a) \right)\cup\bigcup_{i\notin I_\text{out}(a)}\left(W_i^-(a)\cup W_i^+(a)\right)\,.$$
Thus, the incoming and outgoing regions are unions of walls, which are closed and cover the boundary of $\partial B_a$. It then follows that $\partial B_a^\text{out}\cap\partial B_a^\text{in}\neq\emptyset$ whenever $\partial B_a^\text{out}\neq\emptyset$, since it includes some threshold intersections. Under Condition~\ref{cond1}, it always holds that $\partial B_a^\text{in}\neq\emptyset$ because $i\in I_\text{out}^+(a)\Rightarrow W_i^-(a)\subset\partial B_a^\text{in}$, $i\in I_\text{out}^-(a)\Rightarrow W_i^+(a)\subset\partial B_a^\text{in}$, and $i\notin I_\text{out}(a)\Rightarrow W_i^-(a)\cup W_i^+(a)\subset\partial B_a^\text{in}$. This first partition of the boundary of $B_a$ only allows for a distinction to be made between exit directions and the others. It follows that $\partial B_a^\text{out}=\emptyset$ if and only if $f(a)\in\text{int}(B_a)$. 
Using this partition, it is proven by Farcot~\cite{Farcot2006} that under the assumptions of Condition~\ref{cond1} and provided $\partial B_a^\text{out}\neq\emptyset$, when $\mathcal{M}_a$ is restricted to the domain and range $\partial B_a^\text{in}$ and $\partial B_a^\text{out}$ respectively, $\mathcal{M}_a$ is a homeomorphism. It is pointed out by Farcot that in the case where $\lambda_i = \lambda\,\, \forall i$, all the inequalities that define the domains and ranges of the map $\mathcal{M}_a$ through a box $B_a$ are affine, so that the regions are polytopes. In the examples we investigate later, this will be the case.

Thus far, the transition mapping $\mathcal{M}_a$ has been rigorously defined as a homeomorphism within the confines of a single box. Now we turn our attention to a transition map defined globally on the whole space. Although maps defined within boxes with nonempty outgoing domains are invertible, boxes with no escaping direction are more problematic. It is natural to map the boundary of these types of boxes to a single point whose pre-image is the entire box boundary. As a result, in general a global mapping will not be invertible at all points. This leads us to the consider only forward iterates of the map in the global space. 

In fact, Condition~\ref{cond2} implies that any outgoing wall $W\subset\partial B_b^\text{out}$, for some $b$, is part of $\partial B_a^\text{in}$, for $B_a$ adjacent to $B_b$ at wall $W$, unless $W$ is only a wall for $B_b$, when it lies on the boundary of the whole domain $\mathcal{U}$. However, Condition~\ref{cond1} implies that in this case $W\subset\partial B_b^\text{in}$. Thus,
$$\bigcup_{a\in\mathcal{A}}\partial B_a=\bigcup_{a\in\mathcal{A}}\partial B_a^\text{in}\,.$$
So any point on $\bigcup_{a\in\mathcal{A}}\partial B_a$ belongs to $\partial B_b^\text{in}$ for some $b\in\mathcal{A}$. If $\partial B_a^\text{out}\ne \emptyset$, then $\mathcal{M}_a$ is well defined by Equation~\eqref{eq:boxmap}, but $\partial B_a^\text{out}= \emptyset$ occurs when $f(a)$ is in the interior of $B_a$. In this case, $f(a)$ is an asymptotically stable steady state, and all points in $\partial B_a$ are in its basin of attraction, so we can define that $\forall x\in\partial B_a$, $\mathcal{M}_ax=f(a)$ and $\mathcal{M}_af(a)=f(a)$. Then $\{f(a)\}$ has to be added to the domain of $\mathcal{M}_a$. 

It is now convenient to introduce the subset of terminal subscripts 
$$\mathcal{T}=\{a\in\mathcal{A}\,|\,f(a)\in \text{int}(B_a)\}=\{a\in\mathcal{A}\,|\,\boldsymbol{d}(f(a))=a\}$$
and one can then define local transition maps in all boxes as
$$\mathcal{M}_a: x\in\text{Dom}(\mathcal{M}_a)\mapsto\begin{cases}
f(a)+e^{-\Lambda\tau(x)}((x-f(a))\ & \text{if}\hfill\ a\in\mathcal{A}\setminus\mathcal{T}\\
f(a)\hfill & \text{if}\ a\in\mathcal{T}\,.
\end{cases}$$
It then follows that the domain $\text{Dom}(\mathcal{M}_a)=\partial B_a^\text{in}$ for $a\in\mathcal{A}\setminus\mathcal{T}$ and $\text{Dom}(\mathcal{M}_a)=\partial B_a^\text{in}\cup\{f(a)\}$ for $a\in\mathcal{T}$, and globally,
$$\bigcup_{a\in\mathcal{A}}\text{Dom}(\mathcal{M}_a)=\bigcup_{a\in\mathcal{A}}\partial B_a\cup\bigcup_{a\in\mathcal{T}}\{f(a)\}\,.$$

A global mapping may still not properly be defined by the above on all of  $\bigcup_{a\in\mathcal{A}}\text{Dom}(\mathcal{M}_a)$. The problem arises where an $x\in\text{Dom}(\mathcal{M}_a)$ maps to the intersection of two or more walls, in which case the choice of local map is not unique, and in some cases the subsequent flow depends on this choice. To avoid this issue, following Farcot~\cite{Farcot2006}, we exclude all codimension-2 faces from the analysis, along with all points from which those faces can be reached. On such a domain, a global map can be well defined. Farcot expresses the global map in terms of all the local maps for each box $B_a$ using the indicator function ($\boldsymbol{1}_C(x)= 1$ for $x\in C$, 0 otherwise) as, 
\begin{equation}\label{eq:map}
\mathcal{M}x=\sum_{a\in\mathcal{A}}\boldsymbol{1}_{\text{Dom}(\mathcal{M}_a)}(x)\mathcal{M}_ax\,,
\end{equation}
but defines its domain as 
\begin{equation}\label{eq:dom}
\mathscr{D}_0=\bigcup_{a\in\mathcal{A}}\text{Dom}(\mathcal{M}_a)\setminus\bigcup_{k\in\mathbb{N}}\mathcal{M}^{-k}(\mathscr{F}_2)\,,
\end{equation}
where $\mathscr{F}_2$ is the union of all threshold faces of codimension 2 or more, 
$\mathcal{M}^{k}$ is the $k^{th}$ iterate of $\mathcal{M}$, and $\mathcal{M}^{-k}(\mathscr{F}_2)$ is the $k^{th}$ pre-image of the set $\mathscr{F}_2$.

\subsection{Equal decay rates and single thresholds}\label{sec:equaldecay}

Now, as Farcot~\cite{Farcot2006} points out,
$\left(\mathscr{D}_0,\mathcal{M}\right)$ is a properly defined one-sided discrete dynamical system, whose orbits are  $\{\mathcal{M}^kx\}_{k\in\mathbb{N}}$, for $x\in\mathscr{D}_0$. The iterates of $\mathcal{M}$ are compositions of local maps, as determined by the sequence of walls crossed. The general form of these map compositions are derived by Farcot, but for the purpose of explicit calculations on specific examples, we will need to impose another condition on the networks:
\begin{condition}\label{cond3}
    $\forall\, i,\, \lambda_i=\lambda$ and each variable has only a single threshold value.
\end{condition}
A single threshold in each variable implies that in Equation~\eqref{eq:boxalphabet}, $p_i=2$ for all $i$, and all thresholds may be translated to $0$, by $y_i=x_i-\theta_i$ for each $i$. 

Under the assumptions of Condition~\ref{cond3}, the analysis follows that of Edwards~\cite{Edwards2000}, and the local maps are all fractional linear:
\begin{equation}\label{eq:flm}
\mathcal{M}_ay=\frac{\boldsymbol{\beta}_a y}{1+ \psi_a^{\top} y} \,,
\end{equation}
where 
\begin{equation} \boldsymbol{\beta}_a=I-\frac{f(a)\boldsymbol{e}_j^{\top}}{f(a)^{\top}\boldsymbol{e}_j}\,,\quad \psi_a=\frac{-\boldsymbol{e}_j}{f(a)^{\top}\boldsymbol{e}_j}\,,\label{eq:Bpsi}
\end{equation}
and $\boldsymbol{e}_j$ is again the $j^{th}$ standard basis vector, and $j$ is the index of the exit wall of $\mathcal{M}_a$, starting from $x$. Then compositions of these fractional linear maps are also fractional linear, and denoting $\mathcal{M}_{a_k}, \boldsymbol{\beta}_{a_k}$, and $\psi_{a_k}$ more simply as $\mathcal{M}^{(k)}, \boldsymbol{\beta}^{(k)}$, and $\psi^{(k)}$, after $m$ steps the composite map is 
\begin{equation} \mathcal{M}^{(m-1)}\cdots \mathcal{M}^{(0)}y=\frac{\boldsymbol{\beta}^{(m,0)}y}{1+\psi^{(m,0)^{\top}} y }\,,\label{eq:flmtraj}
\end{equation}
where 
$$\boldsymbol{\beta}^{(m,0)}=\boldsymbol{\beta}^{(m-1)}\ldots \boldsymbol{\beta}^{(1)}\boldsymbol{\beta}^{(0)}\,,\; \mbox{ and }\; \psi^{(m,0)} =\psi^{(0)}+\sum_{k=1}^{m-1}\boldsymbol{\beta}^{(k,0)^{\top}}\psi^{(k)}\,.$$
Thus, for a cycle, where after $m$ steps the trajectory returns to its initial wall (we may say $W^{(m)}=W^{(0)}$, using the same numbering for walls as for the maps), we have 
\begin{equation}\label{eq:cyclemap}
\mathcal{M}:W^{(0)}\rightarrow W^{(0)},\quad \mathcal{M}y=\frac{\boldsymbol{\beta}y}{1+\psi^{\top}y}\,,
\end{equation}
with $\boldsymbol{\beta}=\boldsymbol{\beta}^{(m,0)}$ and $\psi=\psi^{(m,0)}$.

A cycle map, as in Equation~\eqref{eq:cyclemap}, is in general not defined on an entire wall, but on a subset of a wall, called a returning cone: $C=\{y\in W^{(0)}\, |\, \mathcal{M}(y)\in W^{(0)}\}$. As detailed in Section~\ref{sec:improved_bound}, one way of computing a cycle's returning cone is to follow the starting wall backwards through the cycle using pre-images of $\mathcal{M}$ and intersect these with each wall along the cycle. In practice, this means identifying which region on a wall maps to the correct region in the next wall. Outside this region, a different wall (not the one following the cycle) will be reached at the next or a subsequent step. Thus, whenever an entry wall to a box along the cycle maps to multiple exit walls, an inequality must be satisfied on the entry wall for each alternative exit variable to ensure that such a diversion from the cycle does not occur. These inequalities must then be mapped back from the relevant entry wall to the starting wall.
The inequality to prevent an alternative exit on the $k^{th}$ wall of the cycle is~\cite{Edwards2000}
\begin{equation}\label{eq:altexit}
-\frac{\boldsymbol{e_i}^\top}{f_i^{(k)}}\boldsymbol{\beta}^{(k)} y^{(k)}>0,
\end{equation}
where $i$ represents an alternative exit direction, and the denominator of the map \eqref{eq:flm}, which is necessarily positive, has been omitted. Note that there may be more than one such $i$ for a given wall. 

The corresponding inequality on the starting wall is
\begin{equation}\label{eq:altexitstart}
-\frac{\boldsymbol{e_i}^\top}{f_i^{(k)}}\boldsymbol{\beta}^{(k)}\boldsymbol{\beta}^{(k-1)}\ldots \boldsymbol{\beta}^{(0)} y^{(0)}>0.
\end{equation}
The region satisfying this inequality on the starting wall gets mapped to the region on the $k^{th}$ wall that satisfies the inequality~\eqref{eq:altexit}. Thus, in order for a trajectory starting on the starting wall to return to the starting wall following the given cycle, it must satisfy the set of inequalities~\eqref{eq:altexitstart} for each alternative exit variable around the cycle. This set of inequalities defines the returning cone: $C=\{ y\in W^{(0)}\;|\; R y\geq 0\}$ where $W^{(0)}$ is the starting boundary and $R$ is a matrix with one row for each alternate exit variable around the cycle, given by
\begin{equation}\label{eq:retconerow}
R_i=-\frac{\boldsymbol{e_i}^\top}{f_i^{(k)}}\boldsymbol{\beta}^{(k)}\boldsymbol{\beta}^{(k-1)}\ldots \boldsymbol{\beta}^{(0)}
\end{equation}
as in the left hand side of~\eqref{eq:altexitstart}.

\section{Symbolic Dynamics Approach} \label{sec:TGentropy}
 
As discussed in the previous section, the dynamics of Glass networks can be represented (without loss of information) using a continuous-space, discrete-time dynamical system. This discretization process can be taken a step further to discretize space as well. Glass network dynamics can be qualitatively represented using symbolic dynamical systems and as a result, techniques from symbolic dynamics can be used to draw useful conclusions about the original system. The following is a summary of the construction used by Farcot to analyze the irregularity of Glass network dynamics using symbolic dynamics. For a detailed description of the construction, see Farcot~\cite{Farcot2006}. 

The partitioning of phase space into boxes along with the transition maps between them allows a natural encoding of the allowable dynamics of the system in terms of a directed graph, called a transition graph and denoted $\TG=\left(\mathcal{A},\mathcal{E}\right)$. The $\TG$ vertices ($\mathcal{A}$) are box subscripts and the edges ($\mathcal{E}$) correspond to pairs of adjacent boxes that can be successively crossed by a trajectory. This includes 1-loops, to handle the case of a focal point in the interior of its own box, $f(a)\in\text{int}(B_a)$, and pairs that are adjacent through an $(n-1)$-dimensional wall. By Condition~\ref{cond2}, $B_a$ and $B_b$ are adjacent via a single wall if and only if $a-b=\pm \boldsymbol{e}_i$, for $i\in\{1,...,n\}$. Trajectories traversing a box $B_a$ can only exit through a wall $W^{\pm}_i$ for $i\in I_\text{out}(a)$ with the direction given by $\varepsilon_i=\text{sign}(\boldsymbol{d}_i(f(a))-a_i)$. Thus, the edge set of the $\TG$ is 
$$\mathcal{E}=\left\{(a,a)\,|\,\ a\in\mathcal{T}\right\}\cup\left\{(a,a+\varepsilon_i\boldsymbol{e}_i)\,|\, a\in\mathcal{A}\setminus\mathcal{T},\ i\in I_\text{out}(a)\right\}\,.$$
The $\TG$ describes transitions between boxes that can occur through faces that have dimension $(n-1)$, {\em i.e.}, through interiors of walls. Now, attractors of the continuous-space, discrete time system $\left(\mathscr{D}_0,\mathcal{M}\right)$ have a counterpart in the $\TG$, but the converse does not hold in general since trajectories on the $\TG$ may correspond to no trajectories in the original continuous-space system. Some information has been lost in going to the $\TG$ dynamics. We will take this point up again in the next Section.

The $\TG$ encodes the possible dynamics of $\left(\mathscr{D}_0,\mathcal{M}\right)$ into a subset of infinite words on the alphabet  $\mathcal{A}$. The words are given by infinite paths on the graph and the set of all words is given by $$
\mathscr{J}(\TG)=\{\boldsymbol{a}=(a^t)_{t\in\mathbb{N}}\,|\, \forall t\in\mathbb{N},\ (a^t,a^{t+1})\in\mathcal{E}\}\subset\mathcal{A}^\mathbb{N}\,.
$$
The space $\mathscr{J}(\TG)$ can be endowed with a metric to make it a metric space, on which discrete dynamics can be defined by the shift operator $\sigma:\mathscr{J}(\TG)\rightarrow\mathscr{J}(\TG)$, where $(\sigma(\boldsymbol{a}))^t=a^{t+1}$. This operator is continuous, for example, in the metric
$$\rho(\boldsymbol{a},\boldsymbol{b})=\begin{cases}
0\hfill \text{if}\ \boldsymbol{a}=\boldsymbol{b}\,,\\
2^{-\text{min}\{t|a^t\neq b^t\}}\ \text{if}\ \boldsymbol{a}\neq\boldsymbol{b}\,.
\end{cases}$$
 Additionally, $\mathscr{J}(\TG)$ is compact for the metric $\rho$ and is $\sigma$-invariant. As a result, $\mathscr{J}(\TG)$ is a shift space, where $(\mathscr{J}(\TG),\sigma)$ constitutes a discrete dynamical system. Since orbits of this system are associated with words defined on the alphabet $\mathcal{A}$, whose elements in turn represent subsets of the state space of the initial dynamical system, the trajectories of $(\mathscr{J}(\TG),\sigma)$ represent sets of trajectories in $\left(\mathscr{D}_0,\mathcal{M}\right)$.

Now, $\mathscr{D}_0$ lies in the union of all faces of boxes taken without the boundaries of these faces (as well as points $f(a)$ for $a\in\mathcal{T}$). Any of these open faces is well defined by the two boxes it is a part of, except those on the boundary $\partial \mathcal{U}$, but since this boundary cannot be reached from the rest of $\mathscr{D}_0$, we can ignore it and define $\mathscr{D}=\mathscr{D}_0\setminus\partial\mathcal{U}$. So now $\forall\, x\in\mathscr{D}$, there is either a unique pair $(a,b)$ such that $x\in\partial B_a^\text{out}\cap \partial B_b^\text{in}$, or some $a\in\mathcal{T}$ such that $x = f(a)$. We can now define a map $\Phi:\mathscr{D}\rightarrow\mathcal{E}$  as
 $$
 \Phi(x)=\begin{cases}
 (a,b)\ \text{if}\ x\in\partial B_a^\text{out}\cap \partial B_b^\text{in}\\
 (a,a)\ \text{if}\ x=f(a),\ \text{for}\ a\in\mathcal{T}
 \end{cases}
 $$
so that $\Phi$ maps to edges of the $\TG$ instead of vertices (except when $a\in\mathcal{T}$). Note that $\Phi^{-1}(a,b)$ is the open wall between two adjacent boxes $B_a$ and $B_b$. This then leads us to consider a new shift space obtained from $\mathscr{J}(\TG)$ through the 2-block map $\beta_2$ defined as
$$(\beta_2(\boldsymbol{a}))^t=\left[
 \begin{aligned}
 &a^t\\
& a^{t+1}
 \end{aligned}
 \right]\in\mathcal{E}\,,$$
 where $\mathscr{J}(\TG)^{[2]}=\beta_2(\mathscr{J}(\TG))\subset\mathcal{E}^\mathbb{N}$ is a shift space. The shift operator on $\mathscr{J}(\TG)^{[2]}$ is then denoted as $\sigma_{[2]}$. Since $\beta_2$ is a conjugacy~\cite[p.18]{LindMarcus1995} $(\mathscr{J}(\TG),\sigma)$ and $(\mathscr{J}(\TG)^{[2]},\sigma_{[2]})$ are conjugate dynamical systems. 
 
Now to encode the trajectories from $(\mathscr{D},\mathcal{M})$ there are two steps. First, define the mapping $\xi:\mathscr{D}\rightarrow\mathscr{D}^\mathbb{N}$ by $\xi(x)=\left(x,\mathcal{M}x,\mathcal{M}^2x,...\right)$. Since $\mathcal{M}$ is continuous on $\mathscr{D}$ it can be proven that $\xi$ is a conjugacy when its range is restricted to $\xi(\mathscr{D})$ (see Farcot~\cite{Farcot2006} for details). The second step is  the mapping $\Phi_\infty:\mathscr{D}^\mathbb{N}\rightarrow\mathscr{J}(\TG)^{[2]}$, given by 
 $$
 \Phi_\infty\left((x^k)_{k\in\mathbb{N}}\right)=\left(\Phi(x^k)\right)_{k\in\mathbb{N}}\,.
 $$
 Here $\Phi_\infty$ maps sequences in $\mathscr{D}$ to sequences in $\mathcal{E}$. Finally, the map $\phi$ is defined as 
 $$
 \phi=\Phi_\infty \circ\xi:\mathscr{D}\rightarrow\mathscr{J}(\TG)^{[2]}\,.
 $$
 It is shown by Farcot~\cite{Farcot2006} that the mapping $\phi$ takes constant values on the domains 
$$
D_a=\bigcap_{j\in\mathbb{N}}\mathcal{M}^{-j}\left(\Phi^{-1}(a^j,a^{j+1})\right)\,,
$$
which are exactly the connected components of $\mathscr{D}$. Hence, $\phi$ is continuous. 
 
As discussed by Farcot~\cite{Farcot2006}, $\Phi_\infty$, and thus $\phi$, is neither injective nor surjective in general. The non-injectivity of $\phi$ is an inevitable feature of the system $(\mathscr{D},\mathcal{M})$, in which the domains $D_a$ associated with admissible itineraries are not single points. The fact that $\phi$ is not surjective means that some infinite paths on the transition graph do not correspond to any admissible trajectory of the continuous-space system. The $\TG$ represents transitions between boxes that are individually feasible, but there is no guarantee that a finite sequence of such transitions is feasible. Thus, $\phi(\mathscr{D})\subset\mathscr{J}(\TG)^{[2]}$ is a proper subset, and exactly the space of admissible trajectories in $\TG$. As detailed by Farcot~\cite{Farcot2006}, $\phi(\mathscr{D})$ inherits shift-invariance from $\mathscr{D}$'s $\mathcal{M}$-invariance. Now, in order for the shift space $\phi(\mathscr{D})$ to define a dynamical system when paired with $\sigma_{[2]}$, it must be compact~\cite[p.185]{LindMarcus1995}. Since $\mathscr{J}(\TG)^{[2]}$ is compact and contains $\phi(\mathscr{D})$ it would suffice to show that $\phi(\mathscr{D})$ is closed. However, it turns out that $\phi(\mathscr{D})$ is not closed in general.

As discussed by Lind and Marcus~\cite[p.180]{LindMarcus1995} there are two equivalent characterizations of shift spaces: they can be defined as shift-invariant compact subspaces of the full shift, or as subspaces of all infinite words on the alphabet defined by a set of forbidden blocks. Now since $\phi$ is constant on each $D_a$, $\phi(\mathscr{D})$ is the set of words $\epsilon=\beta_2(a)$ such that $D_a\neq\emptyset$. Letting $D_a^i=\bigcap_{k=0}^i\mathcal{M}^{-k}\left(\Phi^{-1}(a^k,a^{k+1})\right)$, where $D_a=\bigcap_{i\in\mathbb{N}}D_a^i$ is empty if either one $D_a^i$ is empty or all of them are nonempty but their full intersection is empty. In the latter case, $\beta_2(a)$ is an infinite word that is forbidden in $\phi(\mathscr{D})$ while all its subwords are allowed. Thus, it does not satisfy the first characterization of a shift space. Naturally then the system to consider is $\left(\overline{\phi(\mathscr{D})},\sigma_{[2]}\right)$. This one is a properly defined symbolic dynamical system since the closure $\overline{\phi(\mathscr{D})}$ is clearly compact. 
Now, the dynamics of the two symbolic dynamical systems $\left(\overline{\phi(\mathscr{D})},\sigma_{[2]}\right)$ and $(\mathscr{J}(\TG)^{[2]},\sigma_{[2]})$ may be compared, by means of entropy. 
Entropy is a nonnegative number that is a measure of the complexity of a dynamical system. Nonzero values of entropy are often taken as an indicator of dynamical chaos. 

In symbolic dynamics, entropy is conjugacy invariant and can be computed easily for systems described by directed graphs. This makes it a useful tool for comparing symbolic dynamical systems. We first give the classical definition of entropy for symbolic dynamical systems. 

\begin{definition}
    Let $X$ be a shift space. The entropy of $X$ is defined by 
    \begin{equation}\label{eq:entropy}
        h(X)=\lim_{n\rightarrow\infty}\frac{1}{n}\log|\mathcal{B}_n(X)|\,,
    \end{equation}
    where $|\mathcal{B}_n(X)|$ is the number of blocks of length $n$ for the shift space $X$, and $\log$ is by convention the logarithm with base 2.
\end{definition}
By definition $h(X)$ is nonnegative. A dynamical system with positive entropy is chaotic by some definitions (e.g. DC2~\cite{Downarowicz2014}). In the case when $X$ is defined by way of infinite paths on an oriented graph $G$, let $A$ be the adjacency matrix of $G$: $A_{i,j}\in\{0,1\}$, and $A_{i,j}=1$ if and only if $(i,j)$ is an edge in the graph. Define the irreducible components of $A$ as the equivalence classes for the equivalence relation: $i\sim j$ if $\exists\, p,q\in\mathbb{N},(A^p)_{i,j}\neq0$ and $(A^q)_{j,i}\neq0$. This corresponds exactly to the strongly connected components in $G$. Let $A_i,\ i=1,...,k$ be the sub-matrices of $A$ with all indices in the same equivalence class. If there is a single class, $A$ is said to be irreducible. The Perron-Frobenius theorem ensures that any irreducible matrix with nonnegative entries has a dominant positive eigenvalue $\mu_A$, which is simple, and is associated with a nonnegative eigenvector. Following Lind and Marcus~\cite[p.121]{LindMarcus1995}, the Perron eigenvalue of $A$ is $$\mu_A=\max_{i=1,...,k}\mu_{A_i}$$
where each $\mu_{A_i}$ is the Perron eigenvalue of an irreducible sub-matrix, $A_i$, and the entropy is given by $h(X)=\log\mu_A$.

Finally, since $\mathscr{J}(\TG)$ and $\mathscr{J}(\TG)^{[2]}$ are conjugate and entropy is conjugacy invariant it follows that $h(\mathscr{J}(\TG)^{[2]})=h(\mathscr{J}(\TG))$. As $\mathscr{J}(\TG)$ is exactly the shift space induced by infinite paths on the $\TG$, one simply defines 
$$
h_{\TG}=h(\mathscr{J}(\TG)^{[2]})=h(\mathscr{J}(\TG))\,.
$$
One can then also define the entropy of the true dynamics ($\phi(\mathscr{D})$) of the network as
$$
h_{\phi(\mathscr{D})}=h\left(\overline{\phi(\mathscr{D})}\right)\,.
$$
Since we have that $\overline{\phi(\mathscr{D})}\subset\mathscr{J}(\TG)^{[2]}$, it follows that $h_{\phi(\mathscr{D})}\leq h_{\TG}$.

{

\section{Better Entropy Bounds}\label{sec:improved_bound}

In the previous two sections we discussed the construction from~\cite{Farcot2006} that allows for $(\mathscr{D},\mathcal{M})$ to be mapped into $(\overline{\phi(\mathscr{D})},\sigma_{[2]})$. The entropy of $(\overline{\phi(\mathscr{D})},\sigma_{[2]})$ is then shown to be bounded above by the entropy of the $\TG$, which is an indication of the potential irregularity of the system dynamics. However, for many Glass networks the entire set of dynamics allowed by the $\TG$ may not be realized in the continuous dynamics after transients, so this upper bound on entropy may be very loose. When there exists a {\em trapping region} in phase space, {\em i.e.}, a positively invariant region from which trajectories cannot escape, then the system dynamics may be constrained to a subset of the $\TG$. It may be that all trajectories are eventually confined to one trapping region, but even if there are other attractors, once confined to a trapping region, the entropy of the subsequent dynamics is unaffected by other parts of phase space. Thus, once transients have died out, the long term dynamics can be represented using the subset of the $\TG$ that represents only the trapping region, and the corresponding entropy provides a tighter upper bound than the full $\TG$. 

Additionally it may be that even within the trapping region some dynamics allowed by the $\TG$ subset are also never realized. To account for this, procedures are developed to obtain tighter upper bounds on entropy by eliminating cycles and concatenations of cycles on the $\TG$ that are shown to be impossible or transient in computations of returning regions and their images. The key idea is to represent dynamics within the trapping region by more detailed graphs that disallow the forbidden dynamics still allowed by previous graphs. In doing so, a sequence of progressively more detailed graphs can be constructed where each graph captures more closely the actual dynamics allowed by the trapping region. This sequence can then be shown to have entropies converging to the true entropy within the trapping region. For this section, Condition~\ref{cond3} is not necessary and thus will not be imposed. 

\subsection{Trapping regions}

In order to improve the upper bound on entropy using information about a trapping region, we must first define trapping regions. To do this, a few definitions will be needed, starting with {\em cycles}. Based on the conventions of Lind and Marcus~\cite[p.38]{LindMarcus1995}, 
\begin{definition}
    A path $\pi = e_1\dots e_m$ on a graph $G$ is a finite sequence of edges $e_i$ from $G$ such that $t(e_i) = i(e_{i+1})$ for $1\leq i\leq m-1$, where $i(e)$ and $t(e)$ here indicate the initial and terminal nodes of an edge. The length of $\pi$ is $|\pi|=m$, the number of edges it traverses. The path $\pi$ starts at vertex $i(\pi) = i(e_1)$ and terminates at vertex $t(\pi) = t(e_m)$, and $\pi$ is a path from $i(\pi)$ to $t(\pi)$. A cycle is a path that starts and terminates at the same vertex: $i(\pi)=t(\pi)$. If, in addition, $e_i\ne e_1$ for $i=2,\ldots, m$, we call $\pi$ a first-return cycle (to edge $e_1$).
\end{definition}
Since each cycle on a network's $\TG$ represents a potential sequence of wall transitions in $(\mathscr{D},\mathcal{M})$, for a given cycle $\pi = e_1\dots e_m$, the edge $e_1$ represents a wall from which the path in phase space starts. We will denote this wall as the {\em starting wall} and $e_1$ as the {\em starting edge}. These terms may be used interchangeably if it is clear which context is being referenced. 

In order for a trajectory passing through a starting wall to follow a cycle and return back to that wall, the trajectory must be confined to the region in each wall that maps through the rest of the walls in the cycle. To specify the region of phase space occupied by trajectories following the cycle, or at least its intersections with each wall around the cycle,
we first define the {\em returning region} on the starting wall.
\begin{definition}
Let $\pi=e_1\dots e_m$ be a cycle with starting wall $W_1$ and walls associated with each edge $e_i$ denoted $W_i,\, i\in\{1,\ldots,m\}$. The returning region of $\pi$ on wall $W_1$ is the subset of $W_1$ that under $m$ applications of the map $\mathcal{M}$ will return back to $W_1$ following the path $\pi$. Letting $R_i(S)=\mathcal{M}^{-1}(S)\cap W_i$, this subset is defined as 
\begin{equation}
\mathscr{R}_\pi(W_1)=R_1\circ\cdots\circ R_m(W_1)\,.
\end{equation}
\end{definition}
We can also define the cycle map as follows:
\begin{definition}
    The cycle map, $\mathcal{M}_\pi:\mathscr{R}_\pi(W_1) \to W_1$ is defined by 
    $$
        \mathcal{M}_\pi(x)=\mathcal{M}^m(x), \quad x\in \mathscr{R}_\pi(W_1),
    $$
where $\mathcal{M}^m$ is the $m^{th}$ iterate of $\mathcal{M}$, and $|\pi| = m$.
\end{definition}
When all decay rates in the Glass network are equal (but not in general), these returning regions are polygonal cones commonly referred to as {\it returning cones}. The subset of $\mathscr{D}$ through which trajectories starting in this returning region $\mathscr{R}_\pi(W_1)$ pass is now defined as follows:
\begin{definition}
For a first-return cycle $\pi$ with starting wall $W_1$, the subset of the domain $\mathscr{D}$ traversed by trajectories following $\pi$ is 
\begin{equation}
\mathscr{D}_\pi=\mathscr{D}\cap\bigcup_{i=0}^{|\pi|-1}\mathcal{M}^i(\mathscr{R}_\pi(W_1))\,,
\end{equation}
where $\mathcal{M}^0=id$. We call this the cycle tube associated with cycle $\pi$.
\end{definition}
Using these definitions we can now define a trapping region for a Glass network. 
\begin{definition}
For a given Glass network, and a given starting wall, $W_1$, a trapping region in wall $W_1$ is defined as 
\begin{equation}\label{eq:trapreg}
    \mathscr{T} = \bigcup_{\pi\in\mathbb{T}}\mathscr{R}_\pi(W_1)\,,
\end{equation}
where $\mathbb{T}$ is a set of first-return cycles (possibly infinitely many) on the $\TG$ such that every cycle in $\mathbb{T}$ starts at $e_1$ (the starting edge) and 
$$
    \bigcup_{\pi\in\mathbb{T}}\mathcal{M}_\pi(\mathscr{R}_\pi(W_1))\subseteq
    \mathscr{T}\,.
$$
Its corresponding trapping tube is defined as 
\begin{equation}\label{eq:traptube}
\mathscr{D}_{TR}=\bigcup_{\pi\in\mathbb{T}}\mathscr{D}_{\pi}\,.
\end{equation}
\end{definition}

We note here that transients can often be removed to obtain a smaller trapping region before proceeding with the analysis. In the context of our trapping region and trapping tube, a transient region is any returning region that is non-empty, but after an initial cycle back to the starting wall, is never visited again. This is the case when there are nonempty returning regions that are not mapped into by any $\pi\in \mathbb{T}$. If the returning region of a transient cycle is removed from the trapping region, one is left with a smaller trapping region. Transient regions can be defined as follows.
\begin{definition}\label{def:transientcycle}
    For a given Glass network with a trapping region as defined above, a cycle $\pi_j\in\mathbb{T}$ is a transient cycle (sequence of box transitions) if
    $$
    \bigcup_{\pi\in\mathbb{T}}\mathcal{M}_\pi\left(\mathscr{R}_{\pi}(W_1)\right)\cap\mathscr{R}_{\pi_j}(W_1)=\emptyset\,,
    $$
    and its returning region is a transient region on the wall $W_1$. 
\end{definition}

\begin{definition}
    Let $\mathbb{T}$ be a set of first return cycles that forms a trapping region. Define the set of all cycles in $\mathbb{T}$ with empty returning regions as
    \begin{equation}
        F_1 = \{\pi\in\mathbb{T}\;|\:\mathscr{R}_\pi=\emptyset\}
    \end{equation}
    and define the set of all cycles from $\mathbb{T}$ with non-empty returning regions that are transient under Definition~\ref{def:transientcycle} as 
    \begin{equation}
        \Pi_1 = \{\pi\in\mathbb{T}\;|\; \mathscr{R}_\pi(W_1)\neq\emptyset\;\text{and}\; \pi\; \text{is transient}\;\}. 
    \end{equation}
\end{definition}

We can obtain a trapping region with transients removed for a given Glass network and a given starting wall, $W_1$, as follows. Given a trapping region for the set of cycles $\mathbb{T}$ as given by~\eqref{eq:trapreg}, if the set of transient cycles $\Pi_1\subset \mathbb{T}$ is non-empty then the smaller trapping region is 
\begin{equation}
\bigcup_{\pi\in\mathbb{T}\setminus\Pi_1}\mathscr{R}_\pi(W_1),
\end{equation} 
since
$$
        \bigcup_{\pi\in\mathbb{T}}\mathcal{M}_{\pi}(\mathscr{R}_{\pi}(W_1))\subseteq
        \bigcup_{\pi\in\mathbb{T}\setminus\Pi_1}\mathscr{R}_\pi(W_1).
$$
Its trapping tube is then
\begin{equation}
    \mathscr{D}_{TR}=\bigcup_{\pi\in\mathbb{T}\setminus\Pi_1}\mathscr{D}_{\pi}.
\end{equation}

We are interested in Glass networks in which such a trapping region occurs. To ensure that we are considering only relevant systems we now impose another condition:  
\begin{condition}\label{cond4}
There exists a wall such that some subset $\mathbb{T}\subseteq\Pi$ forms a trapping region, where $\Pi$ is the set of all first-return cycles starting at that wall. 
\end{condition}
This condition means that there is a starting wall that contains a trapping region, so that at least some trajectories always return to the starting wall, with a graph corresponding to the trapping region that is potentially a proper subgraph of the $\TG$. 

We will further assume that we have a minimal trapping region, in the sense that it is not the union of disjoint sets that are themselves trapping regions. Multiple trapping regions correspond to different attractors, each with their own entropy, and we focus here on the entropy of a single attractor. 

In order to represent dynamics, we define the subset of the $\TG$ that corresponds to the trapping region in the following way: 

\begin{definition}
    For a given Glass network satisfying Conditions~\ref{cond1}, \ref{cond2}, and \ref{cond4}, let $\mathbb{T}$ be a set of first-return cycles on the $\TG$ that forms a trapping region. For each $\pi\in\mathbb{T}$, let $G_\pi$ be the subgraph of $\TG$ representing $\pi$. The $\TG_r$ is then defined as $\TG_r = (\mathcal{A}_r,\mathcal{E}_r)$ where
    $$
            \mathcal{A}_r = \bigcup_{\pi\in\mathbb{T}\setminus(F_1\cup\Pi_1)}\mathcal{V}(G_\pi)\,\quad\text{and}\;\quad
            \mathcal{E}_r = \bigcup_{\pi\in\mathbb{T}\setminus(F_1\cup\Pi_1)}\mathcal{E}(G_\pi),
    $$
\end{definition}
and $\mathcal{V}(G_\pi)$ and $\mathcal{E}(G_\pi)$ are the respective vertex and edge sets for the graph $G_\pi$.

As in the previous section, $\TG_r$ encodes the possible dynamics of $\left(\mathscr{D}_\text{TR},\mathcal{M}\right)$ into a subset of infinite words on the alphabet $\mathcal{A}_r$.  The words are similarly given by infinite paths on $\TG_r$ and the set of all words is given by 
$$
\mathscr{J}(\TG_r)=\{\boldsymbol{a}=(a^t)_{t\in\mathbb{N}}\,|\, \forall t\in\mathbb{N},\ (a^t,a^{t+1})\in\mathcal{E}_r\}\subset\mathcal{A}^\mathbb{N}\,.
$$
It should be clear from this definition that $\mathcal{E}_r\subseteq\mathcal{E}$ and $\mathscr{J}(\TG_r)\subseteq\mathscr{J}(\TG)$. In order to compare the continuous dynamics to the dynamics of $\mathscr{J}(\TG_r)$ we must apply the 2-block map to obtain $\mathscr{J}(\TG_r)^{[2]}=\beta_2(\mathscr{J}(\TG_r))\subseteq\mathcal{E}^\mathbb{N}_r\subseteq\mathcal{E}^\mathbb{N}$. It follows that $\mathscr{J}(\TG_r)^{[2]}\subseteq\mathscr{J}(\TG)^{[2]}$. Now we must encode the trajectories of $\left(\mathscr{D}_\text{TR},\mathcal{M}\right)$ in order to compare the  dynamics on the trapping region to $\mathscr{J}(\TG_r)^{[2]}$. The space of admissible trajectories in $\TG_r$ is then exactly $\phi\left(\mathscr{D}_\text{TR}\right)$. We encounter the same problem with $\mathscr{D}_\text{TR}$ as we did with $\mathscr{D}$, namely, that $\phi\left(\mathscr{D}_\text{TR}\right)$ is not compact. So in order to properly define a symbolic dynamical system we must consider the closure, $\overline{\phi\left(\mathscr{D}_\text{TR}\right)}$. It follows that $\overline{\phi(\mathscr{\mathscr{D}_\text{TR}})}\subseteq\mathscr{J}(\TG_r)^{[2]}\subseteq\mathscr{J}(\TG)^{[2]}$. Finally, defining the entropies $h_{\phi(\mathscr{\mathscr{D}_\text{TR}})}=h(\overline{\phi(\mathscr{\mathscr{D}_\text{TR}})})$ and $h_{\TG_r}=h\left(\mathscr{J}(\TG_r)^{[2]}\right)=h\left(\mathscr{J}(\TG_r)\right)$ it follows that $h_{\phi(\mathscr{\mathscr{D}_\text{TR}})}\leq h_{\TG_r}\leq h_{\TG}$. Thus, the entropy of the $\TG_r$ is potentially a better upper bound on the entropy of the true dynamics than is the original $\TG$. 

\subsection{Separated cycle representation of $\TG_r$}\label{sec:splittingTGr}

{

The $\TG_r$ allows for a more accurate representation of the long-term dynamics of a Glass network. However, it may be that it still misses finer details within the trapping region. For example, if multiple cycles on the $\TG_r$ share nodes and edges in common, these can potentially allow additional cycles that have empty returning regions. As a result, while the $\TG_r$ does give an improvement on entropy estimation, it still potentially allows for too much. In order to improve further we need to consider graph representations that separate the allowed cycles from each other in such a way that the forbidden cycles are no longer possibilities. This will in turn produce a more accurate entropy estimate. 

The idea is to produce a new graph in which all the cycles that contribute to the trapping region are separated from each other, and cycles that exist in $\TG_r$ but are not part of the trapping region (because they have empty returning regions) are excluded. In order to do this, however, we will need to impose another condition on the networks that we can consider. 
\begin{condition}\label{cond5}
    There exists a finite $M$ such that $|\pi|\leq M$ for all $\pi\in\mathbb{T}$ where $\mathbb{T}$ is a set of first-return cycles forming a trapping region.  
\end{condition}
This condition avoids the situation where there are an infinite number of distinct first-return cycles whose returning regions form a trapping region and an infinite partition of the starting wall. In our analysis, an infinite number of first-return cycles will lead to infinite alphabets in the derived symbolic dynamical systems, and we prefer not to deal with such difficulties. In most of the examples we have looked at, this situation does not arise, but an example of such a system was presented by Gedeon~\cite{Gedeon2003}.

Under the assumption of Condition~\ref{cond5}, the graph that we propose to use for our new entropy upper bound can be defined as follows:
\begin{definition}
    For a given Glass network satisfying Conditions~\ref{cond1}, \ref{cond2}, \ref{cond4}, and~\ref{cond5}, let $\mathbb{T}$ be the set of first-return cycles that forms a trapping region. The $\TG_r(1)$ is then defined as 
    \begin{equation}
        \TG_r(1) = \left(\bigcup_{\pi\in\mathbb{T}\setminus(F_1\cup\Pi_1)}{\mathcal{A}_\pi},\,\,\mathcal{E}_\text{cross}\cup\bigcup_{\pi\in\mathbb{T}\setminus(F_1\cup\Pi_1)}{\mathcal{E}_\pi}\right),
    \end{equation}
    where $$\mathcal{A}_\pi =\{a_\pi\,|\,a\in\mathcal{V}(G_\pi) \},$$ $$\mathcal{E}_\pi =\{(a_\pi,b_\pi)\,|\,(a,b)\in\mathcal{E}(G_\pi) \},$$  
    $$
        \mathcal{E}_\text{cross} =\bigcup_{\pi,\tau\in\mathbb{T}\setminus(F_1\cup\Pi_1) }\{ (i(e_1)_\pi,t(e_1)_\tau)\,|\,\pi\neq\tau \},
    $$
    and the starting edge, $e_1$, is the same for all $\pi\in\mathbb{T}$. 
\end{definition}
The subscripts $\pi$ and $\tau$ above now distinguish nodes and edges that belong to different cycles. Thus, if an edge $(a,b)$ in $TG_r$ belongs to both cycle $\pi$ and cycle $\tau$, then the graph $\TG_r(1)$ has edges $(a_{\pi},b_{\pi})$ and $(a_{\tau},b_{\tau})$ with distinct vertices.

From the definition, it should be clear that the $\TG_r(1)$ representation does indeed include each cycle contributing to the trapping region and does separate them from each other, while allowing any cycle to follow any other by means of the cross-cycle edges represented by the set $\mathcal{E}_\text{cross}$. 

To show that this new representation does improve on the entropy bound achieved by the $\TG_r$, we will need to define a sliding block code from $\mathscr{J}(\TG_r(1))$ to $\mathscr{J}(\TG_r)$ know as an embedding~\cite[p.18]{LindMarcus1995}. For two shift spaces $X$ and $Y$, an embedding is a sliding block code from $X$ to $Y$ that is one-to-one. Embeddings have the convenient property that if $X$ embeds into $Y$, then $h(X)\leq h(Y)$. Thus, if we can define an embedding from $\mathscr{J}(\TG_r(1))$  to $\mathscr{J}(\TG_r)$, we can prove that the $\TG_r(1)$ produces a potentially lower entropy value than the $\TG_r$.

Up to this point we have exclusively dealt with one-sided shift spaces. The notions of embeddings that we need are defined for two-sided shift spaces by Lind and Marcus~\cite{LindMarcus1995} and thus cannot be directly applied to our situation. However, we can use the natural extension of a one-sided shift space, defined as follows~\cite{French2019}:
\begin{definition}
    For a given one-sided shift space $X$, the natural extension $\widehat{X}$ is the space of bi-infinite sequences such that $x\in\widehat{X}$ if and only if every sub-word of $x$ is an element in the language of $X$. 
\end{definition}
The $\TG_r$ has the convenient property that for any two nodes, there exists a path connecting them. As a result, the $\TG_r$ language, $\mathcal{L}(\mathscr{J}(\TG_r))$, is the same as that of its natural extension $\widehat{\mathscr{J}(\TG_r)}$, {\em i.e.}, $\mathcal{L}(\mathscr{J}(\TG_r)) = \mathcal{L}(\widehat{\mathscr{J}(\TG_r)})$, and therefore
$$
    h(\mathscr{J}(\TG_r)) = h(\widehat{\mathscr{J}(\TG_r)}).
$$
For the purposes of our entropy upper bounds, we can use the natural extension to get the entropy relations we need. 

Now we can define an embedding, as discussed above, by taking each subscripted symbol (node) and removing the subscript to recover a symbol in $\mathscr{J}(\TG_r)$, as follows: 

\begin{definition}  
    Let $\mathbb{T}$ be the set of first-return cycles making up the trapping region. For any $x\in\widehat{\mathscr{J}(\TG_r(1))}$ there exists a $y\in\widehat{\mathscr{J}(\TG_r)}$ such that $x_i=(y_i)_{\pi_i}$ (where $x_i=(x)_i$), for some $\pi_i\in\mathbb{T}$. Define $\psi_1:\widehat{\mathscr{J}(\TG_r(1))}\rightarrow\widehat{\mathscr{J}(\TG_r)}$ as a sliding block code such that  $(\psi_1(x))_i=({y_i})$.
\end{definition}
In simple terms, the map $\psi_1$ takes the subscripted symbols from $\mathscr{J}(\TG_r(1))$ and removes their subscripts. Since the $\TG_r(1)$ is more restrictive than the $\TG_r$, every point from $\mathscr{J}(\TG_r(1))$ has a corresponding point in $\mathscr{J}(\TG_r)$. This map is clearly one-to-one and hence an embedding. Thus, $h(\widehat{\mathscr{J}(\TG_r(1))})\leq h(\widehat{\mathscr{J}(\TG_r)})$. Finally since the $\TG_r(1)$ satisfies the same property as the $\TG_r$ that all nodes can be connected by some path, it follows that 
$$
    h(\mathscr{J}(\TG_r(1)))= h(\widehat{\mathscr{J}(\TG_r(1))})\leq h(\widehat{\mathscr{J}(\TG_r)})=h(\mathscr{J}(\TG_r)).
$$

It is clear that the entropy of $\TG_r(1)$ is less than or equal to the entropy of $\TG_r$. However, it remains to be shown that we can use it to produce an upper bound on entropy for the true dynamics. To demonstrate this is straightforward. Consider the shift of finite type $X_{\mathcal{F}_1}$ defined on the alphabet $\mathcal{A}_r$ with set of forbidden blocks given by
    $$
        \mathcal{F}_1 = F_1\cup\Pi_1\cup\mathcal{F}\,,
    $$
    where $\mathcal{F}$ is the set of forbidden blocks for $\mathscr{J}^{[2]}(\TG_r)$.
From this definition it follows that $\mathcal{F}_1$ is finite and $h_{\phi(\mathscr{D}_{TR})}\leq h(X_{\mathcal{F}_1})$. Then we can define the sliding block code 
$$
    	\zeta_1:\mathscr{J}^{[2]}(\TG_{r}(1))\rightarrow\mathscr{J}^{[2]}(\TG)\,,
	$$
	where $\zeta_1$ is just the mapping that removes all subscripts from symbols, effectively encoding trajectories  into the original alphabet. From the definition of $\zeta_1$, it is clear that it is finite-to-one. Additionally, as detailed by Lind and Marcus~\cite[p.276]{LindMarcus1995}, finite-to-one codes on any shift space preserve entropy. So, it follows that 
	$$
   	 h(\mathscr{J}^{[2]}(\TG_{r}(1))) = h(\zeta_1(\mathscr{J}^{[2]}(\TG_{r}(1)))).
	$$
	Furthermore, it follows that $\zeta_1(\mathscr{J}^{[2]}(\TG_{r}(1))) = X_{\mathcal{F}_1}$, and hence
	$$
    	h(\mathscr{J}^{[2]}(\TG_{r}(1))) = h(\zeta_1(\mathscr{J}^{[2]}(\TG_{r}(1)))) = h(X_{\mathcal{F}_1}).
	$$
Thus, 
$h_{\phi(\mathscr{D}_{TR})}\leq h(X_{\mathcal{F}_1}) = h(\mathscr{J}(\TG_r(1)))\leq h_{\TG_r}\leq h_{\TG}$.

\subsection{Arbitrary levels of refinement}

In the previous subsection, we showed how to remove all cycles not included in $\mathbb{T}$ using the separated cycle representation of the $\TG_r$, the $\TG_r(1)$. However, there may also be concatenations of cycles that are forbidden. It is not uncommon in Glass networks that there are cycles $\pi$ and $\tau$ such that $M_\pi(\mathscr{D}_\pi)\cap \mathscr{D}_\tau = \emptyset$ and hence, cycle $\pi\tau$ is forbidden from ever occurring in the true dynamics. It is easy to then modify the $\TG_{r}(1)$ to reflect this, by simply removing the cross edge between the nodes $i(e_1)_\pi$ and $t(e_1)_\tau$.

Of course, longer concatenations of cycles may also be forbidden, in general. To deal with this, we will need to further generalize the separated cycle representation of a trapping region, beyond returning once to the starting wall as was initially considered. This will allow us to determine concatenations of first-return cycles of length $k\in\mathbb{N}$ that are forbidden.  A general way to identify forbidden concatenations of cycles is to calculate their returning regions. Empty returning regions correspond to forbidden cycles. Returning regions for concatenations of cycles, like $\pi\pi$, $\pi\tau$, $\tau\pi$, and $\tau\tau$, can be calculated in the same way as was done for cycles $\pi$ and $\tau$. We define the tube of a concatenation of first-return cycles as follows: 

\begin{definition}
    For a concatenation of first-return cycles $\pi_1\dots \pi_k$, the concatenation tube is given by
    \begin{equation}
        \mathscr{D}_{\pi_1\dots \pi_k}=\mathscr{D}\cap\bigcup_{i=0}^{|\pi_1|-1}\mathcal{M}^i(\mathscr{R}_{\pi_1\dots \pi_k}(W_1)).
    \end{equation}
\end{definition}
Note that the concatenation tube follows the walls only until the first return ({\em i.e.}, after cycle $\pi_1$).

The returning region of a concatenation of first-return cycles starting with cycle $\pi$ represents a subset of the returning region for cycle $\pi$. The set of all possible concatenations of a fixed number of cycles, $k$, represents a partitioning of the original trapping region as defined previously. Using this, we can redefine the trapping tube $\mathscr{D}_{TR}$ using concatenation tubes in such a way that we can distinguish between regions associated with each concatenation. Thus, for concatenations of length $k$
\begin{equation}
    \mathscr{D}_{TR} = \bigcup_{\pi_1,\dots,\pi_k\in\mathbb{T}}\mathscr{D}_{\pi_1\dots \pi_k}
\end{equation}

It is true that $\mathscr{D}_{TR}$ is partitioned by all the concatenation tubes of all the first-return cycles in $\mathbb{T}$. However, it may be that some of the concatenations within $\mathscr{D}_{TR}$ are transient. Knowing this, it will be beneficial to distinguish between the possible types of concatenations. 
\begin{definition}\label{def:concatenations}
    Let $\mathbb{T}$ be a set of first-return cycles that forms a trapping region. Define the set of all possible concatenations (of length $k$) from $\mathbb{T}$ as  
    \begin{equation}
        \mathbb{T}_k = \{\pi_1\dots\pi_k\;|\;\pi_i\in\mathbb{T}\;\text{for}\;1\leq i\leq k\},
    \end{equation}
    the set of all concatenations (of length $k$) from $\mathbb{T}$ with empty returning regions as 
    \begin{equation}
        \F_k = \{\pi_1\dots\pi_k\;|\; \mathscr{R}_{\pi_1\dots\pi_k}(W_1)=\emptyset\}, 
    \end{equation}
    and the set of all concatenations of cycles (of length $k$) from $\mathbb{T}$ with non-empty returning regions that are transient under Definition~\ref{def:transientcycle} as 
    \begin{equation}
    \Pi_k=\{\pi_1\dots\pi_k\;|\; \mathscr{R}_{\pi_1\dots\pi_k}(W_1)\neq\emptyset\;\text{and}\;\pi_1\dots\pi_k\;\text{is transient}\}. 
    \end{equation}
\end{definition}

Transient concatenation tubes need to be removed from the domain before we continue. This will result in a new trapping region that is a subset of the original. The new domain is simple enough to define. 
\begin{definition}
    Let $\mathbb{T}$ be the set of first-return cycles that forms a trapping region. The domain with transient concatenations removed, $\mathscr{D}_{TR}(k)$, is defined as
    \begin{equation}
        \mathscr{D}_{TR}(k) = \left(\bigcup_{\pi_1\dots\pi_k\in\mathbb{T}_k}\mathscr{D}_{\pi_1\dots \pi_k}\right)\setminus \bigcup_{\tau\in\Pi_k}\mathscr{D}_\tau,
    \end{equation}
    where $\mathscr{D}_\tau$ is the concatenation tube for the concatenation of first-return cycles, $\tau$.
\end{definition}

We can now define a separated representation of $\TG_r$ that separates cycle concatenations of length $k$ and that is consistent with the underlying dynamics. As before, in order for this to give us the correct entropy relations, we will need to consider the natural extension where appropriate. The graph representation that forbids certain cycle concatenations of length $k$ is defined as follows:

\begin{definition}
    For a given Glass network satisfying Conditions~\ref{cond1}, \ref{cond2}, \ref{cond4}, and~\ref{cond5}, let $\mathbb{T}$ be the set of first-return cycles that together forms a trapping region. The $\TG_r(k)$ is then defined as 
    \begin{equation}
        \TG_r(k) = \left(\bigcup_{\pi_1\dots\pi_k\in\mathbb{T}_k\setminus\left(\F_k\cup\Pi_k\right)}{\mathcal{A}_{\pi_1\dots\pi_k}}\,,\,\,\mathcal{E}_\text{cross}\cup\bigcup_{{\pi_1\dots\pi_k\in\mathbb{T}_k\setminus\left(\F_k\cup\Pi_k\right)}}{\mathcal{E}_{\pi_1\dots\pi_k}}\right),
    \end{equation}
    where $$\mathcal{A}_{\pi_1\dots\pi_k} =\{a_{\pi_1\dots\pi_k}\,|\,a\in\mathcal{V}(G_{\pi_1}) \},$$ $$
        \mathcal{E}_{\pi_1\dots\pi_k} =\{(a_{\pi_1\dots\pi_k},b_{\pi_1\dots\pi_k})\,|\,(a,b)\in\mathcal{E}(G_{\pi_1})\setminus\{ e_1\}\} ,
    $$ and 
    $$
        \mathcal{E}_\text{cross} =\bigcup_{\substack{\pi_1\dots\pi_k\in\mathbb{T}_k\setminus\left(\F_k\cup\Pi_k\right)\\
        \tau_1\dots\tau_k\in\mathbb{T}_k\setminus\left(\F_k\cup\Pi_k\right)}}\{(i(e_1)_{\pi_1\dots\pi_k},t(e_1)_{\tau_1\dots\tau_k})\quad |\quad  \pi_2\dots\pi_k=\tau_1\dots\tau_{k-1}
        \}. 
    $$
\end{definition}
Notice that for a given concatenation of first-return cycles of length $k$, the first $k-1$ letters must match the last $k-1$ letters of the previous cycle. For example, the sequence $\pi\tau\tau\pi$ can only be followed by cycles whose subscript starts with $\tau\tau\pi$. This aligns exactly with iterates  within $\mathscr{D}_{TR}(k)$. 

In this way, the graph allows for individual cycle traversals as all the other $\TG$ representations have, but disallows transient concatenations as well as concatenations with empty returning regions. In effect, this representation allows for exclusion of cycle concatenations of arbitrary length. To demonstrate that the $\TG_r(k)$ does indeed produce an improved entropy bound in the way that we desire will require another embedding, $\psi_k$, similar to $\psi_1$. 

\begin{definition}
    Let $\mathbb{T}$ be the set of first-return cycles making up the trapping region. For any $x\in\widehat{\mathscr{J}(\TG_r(k))}$ there exists a $y\in\widehat{\mathscr{J}(\TG_r(k-1))}$ such that $x_i=(y_i)_{\pi_i}$ (where $x_i=(x)_i$), for some $\pi_i\in\mathbb{T}$. Define $\psi_k:\widehat{\mathscr{J}(\TG_r(k))}\rightarrow\widehat{\mathscr{J}(\TG_r(k-1))}$ as a sliding block code such that  $(\psi_k(x))_i=({y_i})$.
\end{definition}
In simple terms, the map $\psi_k$ takes the subscripted symbols from $\mathscr{J}(\TG_r(k))$ and removes their $k^{th}$ subscripts. Since the $\TG_r(k)$ is more restrictive than the $\TG_r(k-1)$, every point from $\mathscr{J}(\TG_r(k))$ has a corresponding point in $\mathscr{J}(\TG_r(k-1))$. Note that under this definition where $k=1$, we retrieve $\psi_1$ as defined previously. This map is also one-to-one and hence an embedding. Thus, 
$$
    h(\widehat{\mathscr{J}(\TG_r(k))})\leq h(\widehat{\mathscr{J}(\TG_r(k-1))}).
$$ 
Finally since each $\TG_r(k)$ satisfies the same property as the $\TG_r$ that all nodes can be connected by some path, if we let $h(\mathscr{J}^{[2]}(\TG_r(k)))= h_{\TG_r(k)}$ it follows that 
$$
    h_{\TG_{r}(k)} \leq\dots h_{\TG_{r}(2)} \leq h_{\TG_{r}(1)} \leq h_{\TG_r}\leq h_{\TG}.
$$
At each inequality we omit mention of the natural extension. However, as before, the inequality relations come from the fact that the entropy of the $\TG$ one-sided systems and their natural extensions are the same and that in the case of the natural extensions, the embeddings induces the inequality relation.

To verify that the $\TG_r(k)$ produces an upper bound on the true dynamics is similar to the case of the $\TG_r(1)$. Consider the shifts of finite type $X_{\mathcal{F}_k}$ defined on the alphabet $\mathcal{A}_r$ with sets of forbidden blocks given by
    $$
        \mathcal{F}_k = \F_k\cup\Pi_k\cup\mathcal{F}_{k-1}\,,
    $$
where $\mathcal{F}_0 = \mathcal{F}$ (the set of forbidden blocks for $\TG_r$). From this definition it follows that $\mathcal{F}_k$ is finite and $h_{\phi(\mathscr{D}_{TR}(k))}\leq h(X_{\mathcal{F}_k})\leq\dots\leq h(X_{\mathcal{F}_1})$. Then we can define the sliding block code 
$$
    	\zeta_k:\mathscr{J}^{[2]}(\TG_{r}(k))\rightarrow\mathscr{J}^{[2]}(\TG)\,,
	$$
	where $\zeta_k$ is just the mapping that removes all subscripts from symbols. From the definition of $\zeta_k$, it is clear that it is finite-to-one. So, it follows that 
	$$
   	 h(\mathscr{J}^{[2]}(\TG_{r}(k))) = h(\zeta_k(\mathscr{J}^{[2]}(\TG_{r}(k)))).
	$$
	Furthermore, $\zeta_k(\mathscr{J}^{[2]}(\TG_{r}(k))) = X_{\mathcal{F}_k}$, and hence
	$$
    	h(\mathscr{J}^{[2]}(\TG_{r}(k))) = h(\zeta_k(\mathscr{J}^{[2]}(\TG_{r}(k)))) = h(X_{\mathcal{F}_k}).
	$$
Note that in Definition~\ref{def:concatenations} the set $\Pi_k$ is finite. This means that in  $\phi(\mathscr{D}_{TR})$ there are only a finite number of transient words associated with the returning regions of elements from $\Pi_k$. Since a finite number of transient words in a shift space will not contribute to the entropy, removing from $\mathscr{D}_{TR}$ the returning regions associated with elements of $\Pi_k$ will not change entropy. Thus for all $k$, $h_{\phi(\mathscr{\mathscr{D}_{\text{TR}}})}=h_{\phi(\mathscr{\mathscr{D}_{\text{TR}}}(k))}$ and we obtain the string of inequalities 
\begin{proposition}
    \begin{equation}
h_{\phi(\mathscr{\mathscr{D}_{\text{TR}}})}=h_{\phi(\mathscr{\mathscr{D}_{\text{TR}}}(k))}\leq h_{\TG_{r}(k)} \leq\dots h_{\TG_{r}(2)} \leq h_{\TG_{r}(1)} \leq h_{\TG_r}\leq h_{\TG}.
\end{equation}
\end{proposition}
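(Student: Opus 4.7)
The proposition collects together a chain of inequalities whose individual links have essentially all been established in the preceding exposition, so the plan is mainly to organize and justify the assembly. I would work from right to left. The outermost inequality $h_{\TG_r}\leq h_{\TG}$ is immediate once one observes that $\mathcal{E}_r\subseteq\mathcal{E}$ forces $\mathscr{J}(\TG_r)\subseteq\mathscr{J}(\TG)$, so $|\mathcal{B}_n(\mathscr{J}(\TG_r))|\leq|\mathcal{B}_n(\mathscr{J}(\TG))|$ and the definition~\eqref{eq:entropy} of entropy yields the inequality. The conjugacy between the vertex-shift and the 2-block (edge-shift) representations then gives the same inequality for the $[2]$-versions.

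For the middle chain $h_{\TG_{r}(k)}\leq\cdots\leq h_{\TG_{r}(1)}\leq h_{\TG_r}$, I would use the sliding block codes $\psi_k$ exactly as indicated. Condition~\ref{cond5} bounds the length of every $\pi\in\mathbb{T}$ by $M$, which makes $\mathbb{T}$ and therefore $\mathbb{T}_k$ finite, so each $\TG_r(k)$ is a finite graph and each $\psi_k$ is a well-defined sliding block code. Because each $\TG_r(k)$ has the property that every pair of vertices is joined by a directed path, its language coincides with the language of its natural extension, and so $h(\mathscr{J}(\TG_r(k)))=h(\widehat{\mathscr{J}(\TG_r(k))})$. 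The injectivity of $\psi_k$ on the two-sided natural extensions turns each $\psi_k$ into an embedding, and embeddings do not increase entropy~\cite[p.18]{LindMarcus1995}, giving $h(\widehat{\mathscr{J}(\TG_r(k))})\leq h(\widehat{\mathscr{J}(\TG_r(k-1))})$. Stringing these together, and using the entropy-equals-natural-extension-entropy identity at each end, produces the refinement chain.

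The link $h_{\phi(\mathscr{D}_{\text{TR}}(k))}\leq h_{\TG_r(k)}$ is obtained through the shift of finite type $X_{\mathcal{F}_k}$. Since $\mathcal{F}_k=\F_k\cup\Pi_k\cup\mathcal{F}_{k-1}$ is finite (finiteness propagates inductively from $\mathcal{F}_0=\mathcal{F}$ together with $|\mathbb{T}_k|<\infty$), $X_{\mathcal{F}_k}$ is a well-defined SFT and every word admitted by $\phi(\mathscr{D}_{\text{TR}}(k))$ is admitted by $X_{\mathcal{F}_k}$, so $h_{\phi(\mathscr{D}_{\text{TR}}(k))}\leq h(X_{\mathcal{F}_k})$. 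The sliding block code $\zeta_k$ that strips subscripts is finite-to-one (each symbol in $\mathcal{A}_r$ has only finitely many preimages in the subscripted alphabet because $|\mathbb{T}_k|<\infty$), and finite-to-one codes preserve entropy~\cite[p.276]{LindMarcus1995}; combined with the observation $\zeta_k(\mathscr{J}^{[2]}(\TG_r(k)))=X_{\mathcal{F}_k}$, this gives $h(X_{\mathcal{F}_k})=h_{\TG_r(k)}$.

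The remaining and, I expect, slightly delicate piece is the leftmost equality $h_{\phi(\mathscr{D}_{\text{TR}})}=h_{\phi(\mathscr{D}_{\text{TR}}(k))}$. Passing from $\mathscr{D}_{\text{TR}}$ to $\mathscr{D}_{\text{TR}}(k)$ deletes precisely the concatenation tubes $\mathscr{D}_\tau$ for $\tau\in\Pi_k$. Because $\Pi_k\subseteq\mathbb{T}_k$ is finite, the corresponding collection of symbolic words removed from $\phi(\mathscr{D}_{\text{TR}})$ is a finite set of finite blocks that can appear at most once in any admissible trajectory (by the definition of transience in Definition~\ref{def:transientcycle}, since such a block can never be revisited). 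The key observation is then that such finitely many ``once-occurring'' blocks contribute only a polynomial-in-$n$ correction to $|\mathcal{B}_n|$: each length-$n$ block containing one of these transient subwords is obtained by choosing a position (at most $n$ choices) and concatenating admissible blocks before and after, and this count is dominated by $|\mathcal{B}_n|$ itself, so removing them cannot change $\tfrac{1}{n}\log|\mathcal{B}_n|$ in the limit. Making this counting step explicit is the one non-cosmetic part of the proof; once it is in place, the equality follows, closing the chain and completing the proof.
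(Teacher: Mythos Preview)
Your proposal is correct and follows essentially the same route as the paper: the paper assembles the chain in the text preceding the proposition using exactly the ingredients you list --- the containment $\mathscr{J}(\TG_r)\subseteq\mathscr{J}(\TG)$ for the rightmost inequality, the embeddings $\psi_k$ on natural extensions for the refinement chain, the finite-to-one codes $\zeta_k$ together with $X_{\mathcal{F}_k}$ for the link to $h_{\phi(\mathscr{D}_{\text{TR}}(k))}$, and the finiteness of $\Pi_k$ for the leftmost equality. Your sketch of the counting argument for that equality is in fact more explicit than the paper's own justification, which simply asserts that ``a finite number of transient words in a shift space will not contribute to the entropy.''
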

This tells us that we can refine the $\TG$ to remove dynamics associated with forbidden cycle concatenations (those with empty returning regions) of any length we wish, and we can still use its entropy as an upper bound on the entropy of the true dynamics. 

{
Finally, we recall the following proposition proven by Lind and Marcus~\cite[p.123]{LindMarcus1995}:
\begin{proposition}\label{prop446}
    Let $X_1\supseteq X_2\supseteq X_3\ldots $ be shift spaces whose intersection is $X$. Then $h(X_k)\rightarrow h(X)$ as $k\rightarrow\infty$. 
\end{proposition}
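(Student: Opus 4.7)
The plan is to prove the two inequalities $\liminf_k h(X_k) \geq h(X)$ and $\limsup_k h(X_k) \leq h(X)$ separately. The first is immediate from monotonicity: since $X_k \supseteq X$ implies $\mathcal{B}_n(X_k) \supseteq \mathcal{B}_n(X)$ for every $n$, taking the $\frac{1}{n}\log$ and letting $n\to\infty$ gives $h(X_k) \geq h(X)$. Moreover, the nesting $X_1 \supseteq X_2 \supseteq \cdots$ makes $h(X_k)$ non-increasing, so $\lim_k h(X_k)$ exists and is at least $h(X)$. The substantive content is the reverse inequality.

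The core step I would carry out is showing that for each fixed $n$ the block sets stabilize, \emph{i.e.}, there exists $K(n)$ with $\mathcal{B}_n(X_k) = \mathcal{B}_n(X)$ for all $k \geq K(n)$. The key identity to establish is $\bigcap_k \mathcal{B}_n(X_k) = \mathcal{B}_n(X)$, and the tool is compactness of the ambient full shift. Given a block $w$ lying in every $\mathcal{B}_n(X_k)$, pick for each $k$ a point $y^{(k)} \in X_k$ in which $w$ appears at a fixed set of coordinates (say positions $0,\ldots,n-1$). By compactness, extract a subsequence $y^{(k_j)} \to y$. Since each $X_m$ is closed and $y^{(k_j)} \in X_m$ for all $k_j \geq m$, the limit $y$ lies in $X_m$ for every $m$, hence in $X$. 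Continuity of coordinate projections forces $y$ to contain $w$ at the same positions, so $w \in \mathcal{B}_n(X)$. Because the $\mathcal{B}_n(X_k)$ form a decreasing sequence of finite sets with this intersection, they stabilize to $\mathcal{B}_n(X)$ after some index $K(n)$.

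To finish, I would invoke the standard consequence of subadditivity of $n \mapsto \log|\mathcal{B}_n(Y)|$, namely that $h(Y) = \inf_n \frac{1}{n}\log|\mathcal{B}_n(Y)|$, so the quantity $\frac{1}{n}\log|\mathcal{B}_n(Y)|$ is a legitimate upper bound on $h(Y)$ for every $n$. Given $\epsilon > 0$, choose $n$ so large that $\frac{1}{n}\log|\mathcal{B}_n(X)| < h(X) + \epsilon$, and then choose $k \geq K(n)$. Then
$$h(X_k) \leq \frac{1}{n}\log|\mathcal{B}_n(X_k)| = \frac{1}{n}\log|\mathcal{B}_n(X)| < h(X) + \epsilon,$$
which yields $\lim_k h(X_k) \leq h(X) + \epsilon$. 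Since $\epsilon$ was arbitrary, $\lim_k h(X_k) \leq h(X)$, and combining with the first paragraph completes the proof.

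The main obstacle is the compactness/closedness argument for the block stabilization; everything else is bookkeeping. In particular, I would be careful to use that each $X_k$ is truly closed in the full shift (part of the definition of a shift space) so that the subsequential limit $y$ actually belongs to each $X_m$ and therefore to the intersection $X$, rather than only to the closure of $X$.
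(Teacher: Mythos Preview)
Your proof is correct. The paper does not actually supply its own proof of this proposition; it simply quotes the result from Lind and Marcus~\cite[p.~123]{LindMarcus1995} (Proposition~4.4.6 there, which explains the label \texttt{prop446}). Your argument---monotonicity for the easy inequality, compactness to show $\bigcap_k \mathcal{B}_n(X_k)=\mathcal{B}_n(X)$ and hence eventual stabilization of the $n$-block sets, and the identity $h(Y)=\inf_n \frac{1}{n}\log|\mathcal{B}_n(Y)|$ from subadditivity to conclude---is precisely the standard proof given in that reference.
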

This gives all the necessary results to prove the following theorem: 
\begin{theorem}\label{theorem1}
    For a given Glass network, $\lim_{k\rightarrow\infty}h_{\TG_{r}(k)} =h_{\phi(\mathscr{\mathscr{D}_{\text{TR}}})}$.
\end{theorem}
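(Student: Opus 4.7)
The plan is to apply Proposition~\ref{prop446} to the nested sequence of shifts of finite type $X_{\mathcal{F}_k}$ constructed in the previous subsection. Since $\mathcal{F}_k=\F_k\cup\Pi_k\cup\mathcal{F}_{k-1}$, we have $\mathcal{F}_1\subseteq\mathcal{F}_2\subseteq\dots$, so $X_{\mathcal{F}_1}\supseteq X_{\mathcal{F}_2}\supseteq\dots$. Combined with the already established equality $h_{\TG_r(k)}=h(X_{\mathcal{F}_k})$, Proposition~\ref{prop446} gives $\lim_{k\to\infty}h_{\TG_r(k)}=h(X_\infty)$, where $X_\infty:=\bigcap_{k\geq 1}X_{\mathcal{F}_k}$. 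The task thus reduces to identifying $X_\infty$ with $\overline{\phi(\mathscr{D}_{TR})}$, since then both sides of the desired equality are equal to $h(X_\infty)$.

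For the inclusion $\overline{\phi(\mathscr{D}_{TR})}\subseteq X_\infty$, I would observe that any $x\in\phi(\mathscr{D}_{TR})$ arises from an actual trajectory lying in the trapping tube. Every finite subword of $x$ thus decomposes as a concatenation of first-return cycles realized by this trajectory, so its returning region is non-empty and it is not transient; hence no block of $\mathcal{F}_k$ appears in $x$, placing $x\in X_{\mathcal{F}_k}$ for every $k$. Since each $X_{\mathcal{F}_k}$ is closed, so is $X_\infty$, and taking closures yields $\overline{\phi(\mathscr{D}_{TR})}\subseteq X_\infty$.

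For the reverse inclusion $X_\infty\subseteq\overline{\phi(\mathscr{D}_{TR})}$, take $x\in X_\infty$. For each $n$ the length-$n$ initial subword of $x$ must correspond to a concatenation $\pi_1\cdots\pi_{k(n)}$ of first-return cycles whose returning region $\mathscr{R}_{\pi_1\cdots\pi_{k(n)}}(W_1)$ is non-empty (otherwise a block of $\mathcal{F}_{k(n)}$ would occur in $x$). These regions form a nested decreasing family as $n\to\infty$, and the starting wall $W_1$ is compact, so by the finite intersection property their closures meet in a non-empty compact set. A point in this intersection is the limit of initial conditions of genuine trajectories realizing longer and longer prefixes of $x$, so their symbolic images converge to $x$ in the shift metric $\rho$, giving $x\in\overline{\phi(\mathscr{D}_{TR})}$.

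The main technical obstacle is this reverse inclusion: one must ensure that the nested closures of returning regions intersect and that a limit point yields a bona fide trajectory with the prescribed itinerary. Care is required because returning regions are defined by strict inequalities (cf.~\eqref{eq:altexit}) and are generally open in $W_1$; a limit point could lie on a codimension-$\geq 2$ face excluded from the domain $\mathscr{D}_0$, in which case one replaces it by nearby interior points whose trajectories still realize arbitrarily long prefixes of $x$, and extracts a convergent subsequence. Once this compactness-plus-approximation step is made precise, Proposition~\ref{prop446} together with the previously established identity $h_{\phi(\mathscr{D}_{TR})}=h_{\phi(\mathscr{D}_{TR}(k))}$ (which shows that removing transient concatenations leaves the true-dynamics entropy unchanged) yields the theorem.
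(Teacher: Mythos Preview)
Your proposal follows exactly the paper's strategy: apply Proposition~\ref{prop446} to the nested sequence $X_{\mathcal{F}_1}\supseteq X_{\mathcal{F}_2}\supseteq\cdots$, use the already established identity $h_{\TG_r(k)}=h(X_{\mathcal{F}_k})$, and identify $\bigcap_{k\ge 1} X_{\mathcal{F}_k}$ with $\overline{\phi(\mathscr{D}_{TR})}$. The paper's own proof simply asserts this last equality as following ``from the definition of each $X_{\mathcal{F}_k}$'' without the two-inclusion compactness argument you outline, so your write-up is in fact more detailed than the published version.
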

\begin{proof}
    From the definition of each $X_{\mathcal{F}_k}$, it follows that 
    $$
    	\bigcap_{k=1}^\infty X_{\mathcal{F}_k} = \overline{\phi(\mathscr{D}_{TR})}.
$$
So by Proposition~\ref{prop446}, it follows that $\lim_{k\rightarrow\infty}h(X_{\mathcal{F}_k}) = h_{\phi(\mathscr{D}_{\text{TR}})}$. Finally, since $h(X_{\mathcal{F}_k}) = h_{\TG_r(k)}$, it follows that 
$\lim_{k\rightarrow\infty}h_{\TG_{r}(k)} =h_{\phi(\mathscr{\mathscr{D}_{\text{TR}}})}$.
\end{proof}
}

Since it is easy to compute the entropy of $\mathscr{J}^{[2]}(\TG_{r}(k))$ for any $k$ from the adjacency matrix of $TG_r(k)$, Theorem~\ref{theorem1} allows us to find an upper bound arbitrarily close to the true entropy. However, it should be emphasized that we have no way to determine a level of refinement, $k$,  guaranteeing that our estimate is within a specified $\varepsilon$ of the true entropy.

\begin{remark} Since the $\TG_r(k)$ adjacency matrices are sparse, computation of the eigenvalues is reliable for large $k$. This gives us a practical method of estimating the entropy of a network attractor to any arbitrary level of refinement, in principle. 
\end{remark}

}

\section{An Example} \label{sec:example}

We now consider an example Glass network to demonstrate the efficacy of these graph refinements. This example will fall into the class of networks that satisfies Condition~\ref{cond3}. Note that although the theory of the previous section applies in general, computation of returning regions is only practicable when Condition~\ref{cond3} holds. Consider the following Glass network: 
\begin{equation}\label{eq:example}
\begin{aligned}
  \dot{y}_1 &= - y_1 + 2\left(\bar{Y}_3Y_4 + Y_2Y_3\right) - 1 \\
  \dot{y}_2 &= - y_2 + 2\left(Y_1\bar{Y}_3 Y_4+\bar{Y}_1 Y_3 Y_4+\bar{Y}_1\bar{Y}_3\bar{Y}_4\right) - 1 \\
  \dot{y}_3 &= - y_3 + 2\left(\bar{Y}_1 Y_2 + Y_1 Y_4\right) - 1 \\
  \dot{y}_4 &= - y_4 + 2\left(Y_2\bar{Y}_3 + \bar{Y}_1 Y_3\right) - 1
\end{aligned}
\end{equation}
where $Y_i=0$ if $y_i<0$, and $1$ if $y_i>0$. $\bar{Y}_i=1-Y_i$. This network is simple in the sense that all its degradation rates are 1, all the focal points are located at $\pm1$, and each variable has only a single threshold. Its $\TG$ can be represented by the 4-dimensional hypercube in Figure~\ref{GMTG} (see~\cite{Edwards2001}). Projections of example trajectories are shown in Figure~\ref{GNPP}. From the adjacency matrix of the $\TG$, we calculate the approximate entropy (the logarithm of the Perron eigenvalue) to be $h_{\TG}\approx0.873$. 
\begin{figure}
    \centering
    {\Large
    \scalebox{0.55}{
    \begin{tikzpicture}[very thick, every node/.style={sloped,allow upside down},scale=.5]
	\node[inner sep=2pt, circle, draw, fill, label={north west:$0000$}] (0000) at (0,0) {};
    \node[inner sep=2pt, circle, draw, fill, label={north west:$0001$}] (0001) at (8,3) {};
    \node[inner sep=2pt, circle, draw, fill, label={north west:$0010$}] (0010) at (7,9) {};
    \node[inner sep=2pt, circle, draw, fill, label={north east:$0011$}] (0011) at (11,6) {};
    \node[inner sep=2pt, circle, draw, fill, label={north west:$0100$}] (0100) at (0,20) {};
    \node[inner sep=2pt, circle, draw, fill, label={south east:$0101$}] (0101) at (8,11) {};
    \node[inner sep=2pt, circle, draw, fill, label={north west:$0110$}] (0110) at (7,29) {};
    \node[inner sep=2pt, circle, draw, fill, label={north west:$0111$}] (0111) at (11,14) {};
    \node[inner sep=2pt, circle, draw, fill, label={east:$1000$}] (1000) at (20,0) {};
    \node[inner sep=2pt, circle, draw, fill, label={north west:$1001$}] (1001) at (16,3) {};
    \node[inner sep=2pt, circle, draw, fill, label={north west:$1010$}] (1010) at (27,9) {};
    \node[inner sep=2pt, circle, draw, fill, label={north west:$1011$}] (1011) at (19,6) {};
    \node[inner sep=2pt, circle, draw, fill, label={north west:$1100$}] (1100) at (20,20) {};
    \node[inner sep=2pt, circle, draw, fill, label={north west:$1101$}] (1101) at (16,11) {};
    \node[inner sep=2pt, circle, draw, fill, label={north west:$1110$}] (1110) at (27,29) {};
    \node[inner sep=2pt, circle, draw, fill, label={east:$1111$}] (1111) at (19,14) {};
    \draw (1000) --node {\midarrow} (0000);
    \draw (0000) --node {\midarrow} (0100);
    \draw (0010) --node {\midarrow} (0000);
    \draw (1100) --node {\midarrow} (0100);
    \draw (1100) --node {\midarrow} (1000);
    \draw (0110) --node {\midarrow} (0010);
    \draw (0100) --node {\midarrow} (0110);
    \draw (0110) --node {\midarrow} (1110);
    \draw (1110) --node {\midarrow} (1010);
    \draw (1010) --node {\midarrow} (1000);
    \draw (1000) --node {\midarrow} (0000);
    \draw (1010) --node {\midarrow} (0010);
    \draw (1110) --node {\midarrow} (1100);
    \draw (0001) --node {\midarrow} (1001);
    \draw (1001) --node {\midarrow} (1011);
    \draw (1011) --node {\midarrow} (0011);
    \draw (0011) --node {\midarrow} (0001);
    \draw (0101) --node {\midarrow} (0001);
    \draw (1001) --node {\midarrow} (1101);
    \draw (1111) --node {\midarrow} (1011);
    \draw (0011) --node {\midarrow} (0111);
    \draw (0101) --node {\midarrow} (1101);
    \draw (0101) --node {\midarrow} (0111);
    \draw (0111) --node {\midarrow} (1111);
    \draw (1101) --node {\midarrow} (1111);
    \draw (0001) --node {\midarrow} (0000);
    \draw (1001) --node {\midarrow} (1000);
    \draw (0010) --node {\midarrow} (0011);
    \draw (1011) --node {\midarrow} (1010);
    \draw (0100) --node {\midarrow} (0101);
    \draw (0110) --node {\midarrow} (0111);
    \draw (1111) --node {\midarrow} (1110);
    \draw (1100) --node {\midarrow} (1101);
    \end{tikzpicture}
    }
    }
    \caption{TG for example Glass network in Equation~\eqref{eq:example}.}
    \label{GMTG}
\end{figure}

\begin{figure}
    \centering
    \includegraphics[scale=.18]{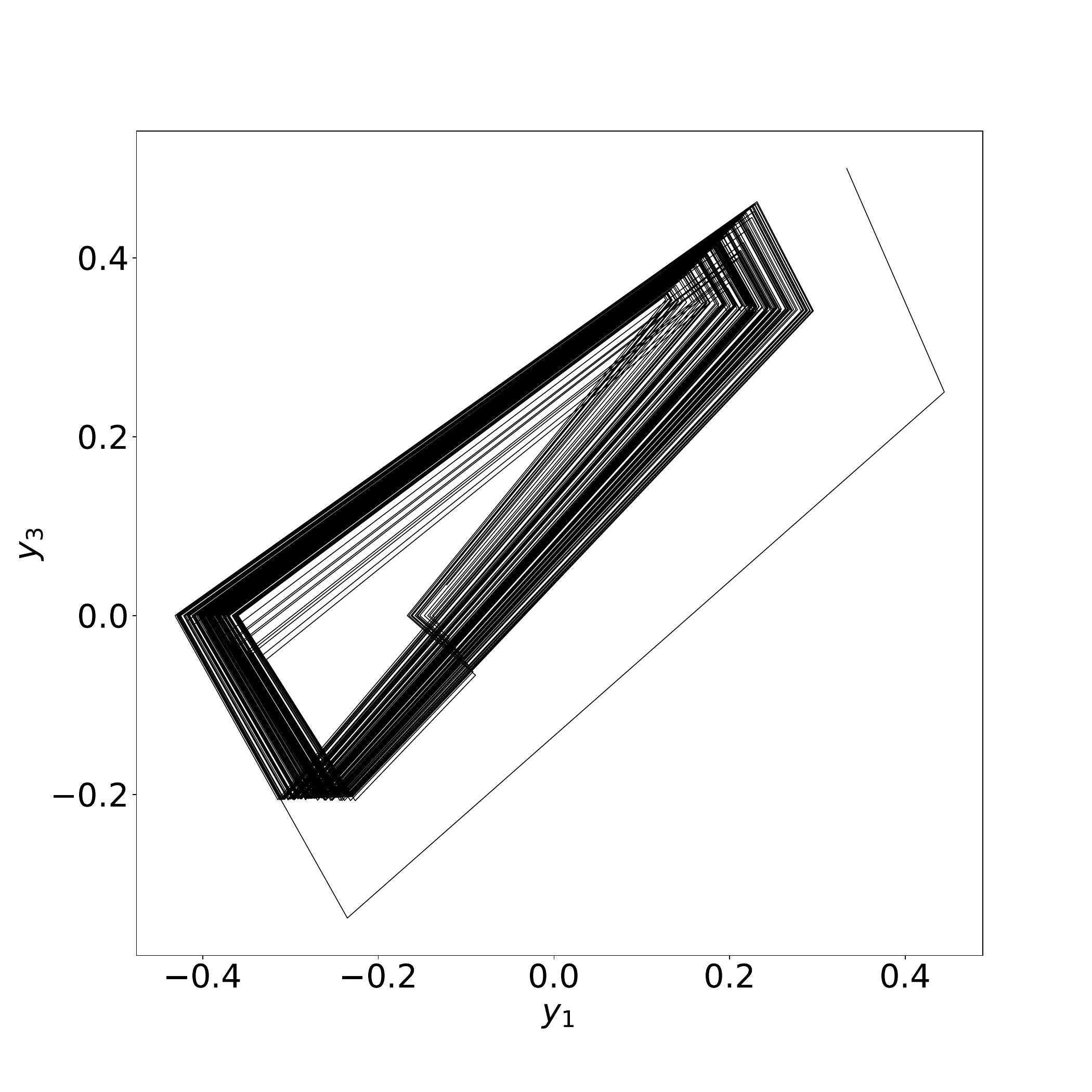}\includegraphics[scale=.18]{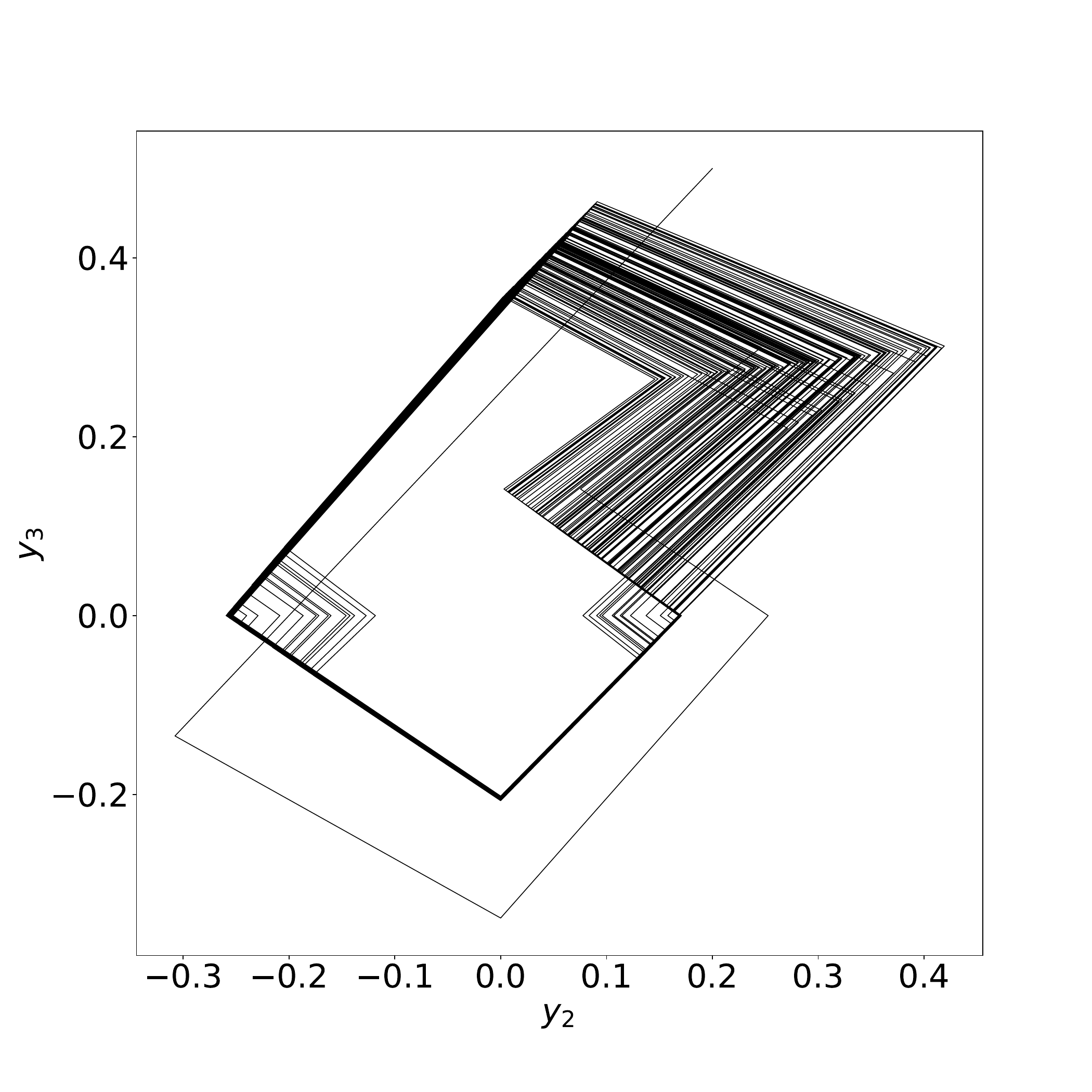}
    \caption{Projections of a phase portrait for example Glass network in Equation~\eqref{eq:example}.}
    \label{GNPP}
\end{figure}

Now, using the wall between boxes $1111$ and $1110$ as a starting wall (which we may denote $+++0$ to indicate the sign of each $y_i$), among the many cycles on the $\TG$ that return to this wall are two, denoted $A$ and $B$, that form a trapping region~\cite{Edwards2001}.
$$
\begin{aligned}
A&:1110\rightarrow1010\rightarrow0010\rightarrow0000\rightarrow0100\rightarrow0110\rightarrow0111\rightarrow1111, \\ 
B&:1110\rightarrow1010\rightarrow0010\rightarrow0011\rightarrow0001\rightarrow0000\rightarrow0100\rightarrow0101\rightarrow0111\rightarrow1111.
\end{aligned}  
$$
Figure~\ref{fig:TGCYCLS} shows the $\TG$ with cycles $A$ and $B$ outlined in red and blue respectively.
\begin{figure}
     \centering
     {\Large
     \begin{subfigure}[b]{0.45\textwidth}
         \centering
        \scalebox{0.38}{
	\begin{tikzpicture}[very thick, every node/.style={sloped,allow upside down},scale=.5]
	\node[inner sep=2pt, circle, draw, fill, label={north west:$0000$}] (0000) at (0,0) {};
    \node[inner sep=2pt, circle, draw, fill, label={north west:$0001$}] (0001) at (8,3) {};
    \node[inner sep=2pt, circle, draw, fill, label={north west:$0010$}] (0010) at (7,9) {};
    \node[inner sep=2pt, circle, draw, fill, label={north east:$0011$}] (0011) at (11,6) {};
    \node[inner sep=2pt, circle, draw, fill, label={north west:$0100$}] (0100) at (0,20) {};
    \node[inner sep=2pt, circle, draw, fill, label={south east:$0101$}] (0101) at (8,11) {};
    \node[inner sep=2pt, circle, draw, fill, label={north west:$0110$}] (0110) at (7,29) {};
    \node[inner sep=2pt, circle, draw, fill, label={north west:$0111$}] (0111) at (11,14) {};
    \node[inner sep=2pt, circle, draw, fill, label={east:$1000$}] (1000) at (20,0) {};
    \node[inner sep=2pt, circle, draw, fill, label={north west:$1001$}] (1001) at (16,3) {};
    \node[inner sep=2pt, circle, draw, fill, label={north west:$1010$}] (1010) at (27,9) {};
    \node[inner sep=2pt, circle, draw, fill, label={north west:$1011$}] (1011) at (19,6) {};
    \node[inner sep=2pt, circle, draw, fill, label={north west:$1100$}] (1100) at (20,20) {};
    \node[inner sep=2pt, circle, draw, fill, label={north west:$1101$}] (1101) at (16,11) {};
    \node[inner sep=2pt, circle, draw, fill, label={north west:$1110$}] (1110) at (27,29) {};
    \node[inner sep=2pt, circle, draw, fill, label={east:$1111$}] (1111) at (19,14) {};
    \draw (0000) --node {\midarrow} (0100)[style=rededge, line width=3pt];
    \draw (0010) --node {\midarrow} (0000)[style=rededge, line width=3pt];
    \draw (1100) --node {\midarrow} (0100);
    \draw (1100) --node {\midarrow} (1000);
    \draw (0110) --node {\midarrow} (0010);
    \draw (0100) --node {\midarrow} (0110)[style=rededge, line width=3pt];
    \draw (0110) --node {\midarrow} (1110);
    \draw (1110) --node {\midarrow} (1010)[style=rededge, line width=3pt];
    \draw (1010) --node {\midarrow} (1000);
    \draw (1000) --node {\midarrow} (0000);
    \draw (1010) --node {\midarrow} (0010)[style=rededge, line width=3pt];
    \draw (1110) --node {\midarrow} (1100);
    \draw (0001) --node {\midarrow} (1001);
    \draw (1001) --node {\midarrow} (1011);
    \draw (1011) --node {\midarrow} (0011);
    \draw (0011) --node {\midarrow} (0001);
    \draw (0101) --node {\midarrow} (0001);
    \draw (1001) --node {\midarrow} (1101);
    \draw (1111) --node {\midarrow} (1011);
    \draw (0011) --node {\midarrow} (0111);
    \draw (0101) --node {\midarrow} (1101);
    \draw (0101) --node {\midarrow} (0111);
    \draw (0111) --node {\midarrow} (1111)[style=rededge, line width=3pt];
    \draw (1101) --node {\midarrow} (1111);
    \draw (0001) --node {\midarrow} (0000);
    \draw (1001) --node {\midarrow} (1000);
    \draw (0010) --node {\midarrow} (0011);
    \draw (1011) --node {\midarrow} (1010);
    \draw (0100) --node {\midarrow} (0101);
    \draw (0110) --node {\midarrow} (0111)[style=rededge, line width=3pt];
    \draw (1111) --node {\midarrow} (1110)[style=rededge, line width=3pt];
    \draw (1100) --node {\midarrow} (1101);
    \end{tikzpicture}
	}
         \caption{$ $}
         \label{fig:aa}
     \end{subfigure}
     \hfill
     \begin{subfigure}[b]{0.45\textwidth}
         \centering
       \scalebox{0.38}{
	\begin{tikzpicture}[very thick, every node/.style={sloped,allow upside down},scale=.5]
	\node[inner sep=2pt, circle, draw, fill, label={north west:$0000$}] (0000) at (0,0) {};
    \node[inner sep=2pt, circle, draw, fill, label={north west:$0001$}] (0001) at (8,3) {};
    \node[inner sep=2pt, circle, draw, fill, label={north west:$0010$}] (0010) at (7,9) {};
    \node[inner sep=2pt, circle, draw, fill, label={north east:$0011$}] (0011) at (11,6) {};
    \node[inner sep=2pt, circle, draw, fill, label={north west:$0100$}] (0100) at (0,20) {};
    \node[inner sep=2pt, circle, draw, fill, label={south east:$0101$}] (0101) at (8,11) {};
    \node[inner sep=2pt, circle, draw, fill, label={north west:$0110$}] (0110) at (7,29) {};
    \node[inner sep=2pt, circle, draw, fill, label={north west:$0111$}] (0111) at (11,14) {};
    \node[inner sep=2pt, circle, draw, fill, label={east:$1000$}] (1000) at (20,0) {};
    \node[inner sep=2pt, circle, draw, fill, label={north west:$1001$}] (1001) at (16,3) {};
    \node[inner sep=2pt, circle, draw, fill, label={north west:$1010$}] (1010) at (27,9) {};
    \node[inner sep=2pt, circle, draw, fill, label={north west:$1011$}] (1011) at (19,6) {};
    \node[inner sep=2pt, circle, draw, fill, label={north west:$1100$}] (1100) at (20,20) {};
    \node[inner sep=2pt, circle, draw, fill, label={north west:$1101$}] (1101) at (16,11) {};
    \node[inner sep=2pt, circle, draw, fill, label={north west:$1110$}] (1110) at (27,29) {};
    \node[inner sep=2pt, circle, draw, fill, label={east:$1111$}] (1111) at (19,14) {};
    \draw (0000) --node {\midarrow} (0100)[style=blueedge, line width=3pt];
    \draw (0010) --node {\midarrow} (0000);
    \draw (1100) --node {\midarrow} (0100);
    \draw (1100) --node {\midarrow} (1000);
    \draw (0110) --node {\midarrow} (0010);
    \draw (0100) --node {\midarrow} (0110);
    \draw (0110) --node {\midarrow} (1110);
    \draw (1110) --node {\midarrow} (1010)[style=blueedge, line width=3pt];
    \draw (1010) --node {\midarrow} (1000);
    \draw (1000) --node {\midarrow} (0000);
    \draw (1010) --node {\midarrow} (0010)[style=blueedge, line width=3pt];
    \draw (1110) --node {\midarrow} (1100);
    \draw (0001) --node {\midarrow} (1001);
    \draw (1001) --node {\midarrow} (1011);
    \draw (1011) --node {\midarrow} (0011);
    \draw (0011) --node {\midarrow} (0001)[style=blueedge, line width=3pt];
    \draw (0101) --node {\midarrow} (0001);
    \draw (1001) --node {\midarrow} (1101);
    \draw (1111) --node {\midarrow} (1011);
    \draw (0011) --node {\midarrow} (0111);
    \draw (0101) --node {\midarrow} (1101);
    \draw (0101) --node {\midarrow} (0111)[style=blueedge, line width=3pt];
    \draw (0111) --node {\midarrow} (1111)[style=blueedge, line width=3pt];
    \draw (1101) --node {\midarrow} (1111);
    \draw (0001) --node {\midarrow} (0000)[style=blueedge, line width=3pt];
    \draw (1001) --node {\midarrow} (1000);
    \draw (0010) --node {\midarrow} (0011)[style=blueedge, line width=3pt];
    \draw (1011) --node {\midarrow} (1010);
    \draw (0100) --node {\midarrow} (0101)[style=blueedge, line width=3pt];
    \draw (0110) --node {\midarrow} (0111);
    \draw (1111) --node {\midarrow} (1110)[style=blueedge, line width=3pt];
    \draw (1100) --node {\midarrow} (1101);
    \end{tikzpicture}
	}
         \caption{$ $}
         \label{fig:bb}
     \end{subfigure}
     }
        \caption{ $\TG$ for example Glass network in Equation~\eqref{eq:example} with cycle $A$ outlined in Red (a) and cycle $B$ outlined in Blue (b).}
   	\label{fig:TGCYCLS}
\end{figure}

As discussed above, to show that the returning cones for these two cycles form a trapping region, one calculates the returning cones for each cycle and shows that their images under their respective cycle maps lie in the union of their returning cones. Thus, any trajectory that follows cycle $A$ or $B$ once will necessarily follow one or the other at each iteration for the rest of time. For cycle $A$ in our example network, from the starting wall there is one alternative exit variable, $i=3$. On the next wall, $+0+-$ (between boxes $1110$ and $1010$), $i=3$ is again the only alternative exit variable. On the next wall, $0-+-$ (between boxes $1010$ and $0010$) the only alternative exit variable is $i=4$. Then, on $--0-$ (between boxes $0010$ and $0000$), there are no alternative exit variables so this wall does not contribute a row to the matrix $R$. On the wall $-0--$ (between boxes $0000$ and $0100$), there is one alternative exit variable, $i=4$. On $-+0-$ (between boxes $0100$ and $0110$), there are two alternative exit variables, $i=1$ and $i=2$. On $-++0$ (between boxes $0110$ and $0111$), there are no alternative exit variables. Finally, on the last wall of the cycle, $0+++$ (between boxes $0111$ and $1111$), there is one alternative exit variable, $i=2$. 

Using all of these alternative exit variables for cycle $A$ in  equation~\eqref{eq:retconerow}, we compute the matrix $R$ for cycle $A$ as
$$
R_A=\begin{pmatrix}
0&-1&1&0\\
-1&-2&1&0\\
2&4&-1&-1\\
2&4&-1&-1\\
-3&-8&4&1\\
-2&-5&2&1\\
-2&-5&2&1
\end{pmatrix}.
$$
Because necessarily $y_4=0$ on the starting wall, we can ignore the final column of $R_A$, and by a slight abuse of notation let $y$ denote just $(y_1,y_2,y_3)^{\top}$. We can also remove duplicate rows of $R_A$, and actually we can remove rows for which the inequality is already implied by another row or rows. Removing all such redundancies, $R_A$ becomes
\begin{equation}
R_A=\begin{pmatrix}
2&4&-1\\
-2&-5&2
\end{pmatrix}.
\end{equation}

Similarly, following the same procedure, the $R$ matrix that defines the returning cone for cycle $B$ is 
\begin{equation}
R_B=\begin{pmatrix}
6&11&-2\\
-2&-4&1
\end{pmatrix}.
\end{equation}
We can also describe the returning cones by means of their extremal vectors (vertices of a polygonal cross section of the cone)~\cite{Edwards2001}. Berman and Plemmons~\cite[pp.1--2]{BermanPlemmons1994} define a cone as follows.
\begin{definition}
For a set $S\subseteq \mathbb R^n$, the set generated by $S$ is the set of finite non-negative linear combinations of elements of $S$:
$$
S^G=\left\{\sum_{i=1}^mc_ix_i,\: \text{for some }c_i\geq0,\;x_i\in S,m\; \text{finite}\right\}.
$$
A set $K$ is a cone if $K=K^G$. 
\end{definition}
So for any $S$, $S^G$ is a cone. The returning cones for cycles $A$ and $B$ are (respectively):
\begin{align}
C_A&=\left\{\left(0,\frac{2}{7},\frac{5}{7}\right)^{\top},\left(\frac{1}{2},0,\frac{1}{2}\right)^{\top},\left(\frac{1}{3},0,\frac{2}{3}\right)^{\top},\left(0,\frac{1}{5},\frac{4}{5}\right)^{\top}\right\}^G,\label{eq:exreturncone1}\\
C_B&=\left\{\left(0,\frac{2}{13},\frac{11}{13}\right)^{\top},\left(\frac{1}{4},0,\frac{3}{4}\right)^{\top},\left(\frac{1}{3},0,\frac{2}{3}\right)^{\top},\left(0,\frac{1}{5},\frac{4}{5}\right)^{\top}\right\}^G. \label{eq:exreturncone2}
\end{align}
On $C_A$ and $C_B$ their respective cycle maps act together as a Poincaré map. 
Since the cycle map acts on (a subset of) the starting wall, which is in $\mathbb{R}^{n-1}$, we can express it in terms of $(n-1)$-vectors and an $(n-1)\times (n-1)$ matrix. 
On the starting wall one of the coordinates is always zero ($y_i=0$), so we can remove it from the cycle map. 
Also, since in the computation of the cycle matrix the $i^{th}$ row is all $0$'s, we can remove that row also. 
So we can reduce the cycle map for a given cycle by one row and column. 
For this example we can write the map~\eqref{eq:cyclemap} for each of the cycles in terms of $3\times 3$ matrices, and $3$-vectors as 
$$ 
M_A=\frac{A{y}}{1+\phi^{\top}{y}},\quad
M_B=\frac{B{y}}{1+\psi^{\top}{y}}\,,
$$
where
$$
A=\begin{pmatrix}
-3&-8&4\\
-2&-5&2\\
-4&-12&7
\end{pmatrix},\;B=\begin{pmatrix}
5&8&0\\
6&11&-2\\
12&20&-1
\end{pmatrix},\\
$$
$$
\phi=(-4,-14,10)^{\top},\;\psi=(12,18,2)^{\top}.
$$
Applying each of the cycle maps to their respective returning cones, one finds that each returning cone gets mapped into the union of the two. Returning cones are proper (non-empty, pointed) cones in ${\mathbb R}^{n-1}$ (a wall in ${\mathbb R}^n$) with vertex at the origin. 

In a Glass network with uniform decay rates, rays map to rays under the mappings from wall to wall and radial dynamics is always convergent (trajectories starting on the same ray converge under iteration of the maps)~\cite{Edwards2000}. So, in order to depict where returning cones, $C_A$ and $C_B$, in our example, are in ${\mathbb R}^3$ (for a Glass network in ${\mathbb R}^4$), and their images under their respective cycle maps, one can represent a point on a ray with the ray's intersection with the plane of unit $L_1$ norm, $y_1+y_2+y_3=1$. 
\begin{figure}
    \centering
    \includegraphics[scale=.3]{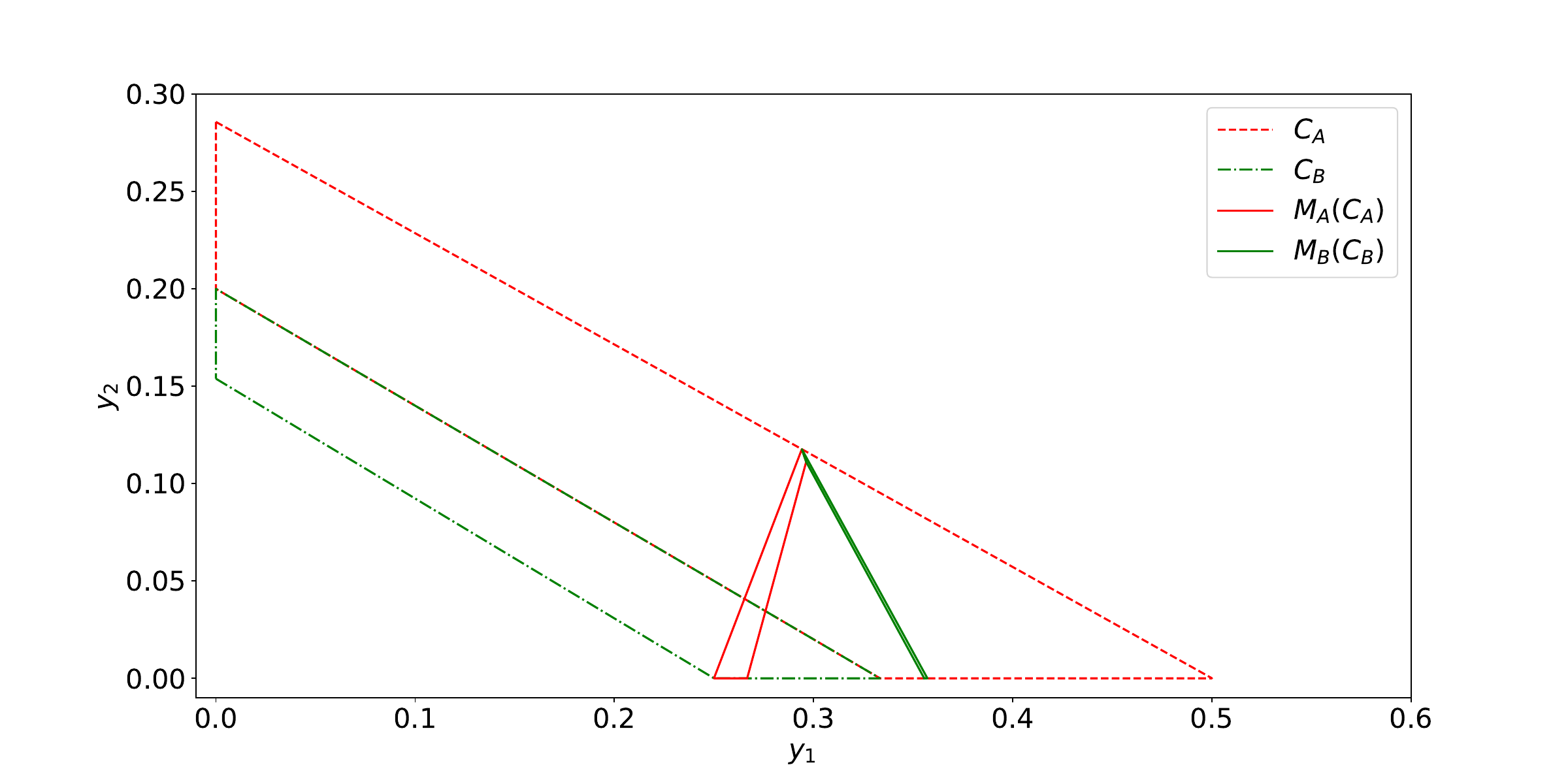}
    \caption{Returning cones for cycles $A$ and $B$ from Equations~\eqref{eq:exreturncone1} and~\eqref{eq:exreturncone2}, represented by the cross section in the plane $y_1+y_2+y_3=1$ with only the first two coordinates plotted, and their images under their respective cycle maps. Note that although $M_B(C_B)$ is very narrow, it has non-empty interior.} 
    \label{fig:RCCS}
\end{figure}
Thus, the 3-dimensional returning cone is represented by a two-dimensional polygon, which is the cross section of the cone with the unit $L_1$-norm plane, with only the first two coordinates plotted in Figure~\ref{fig:RCCS}. The image of each returning cone under its respective map, $M_A(C_A)$ and $M_B(C_B)$ is also depicted in Figure~\ref{fig:RCCS}, showing that all points in $C_A\cup C_B$ map back into $C_A\cup C_B$, which is thus a trapping region. Thus, long term dynamics of this network (or at least the basin of attraction of this trapping region) can be represented on the subset of the $\TG$ that is just the union of cycles $A$ and $B$. Figure~\ref{fig:1TGR} shows the reduced graph, $\TG_r$, that contains only the dynamics allowed by cycles $A$ and $B$. The entropy calculated from the $\TG_r$'s adjacency matrix is $h_{\TG_r}\approx0.224$, about one quarter the entropy of the original $\TG$. However, the $\TG_r$ contains 4 cycles, not just $A$ and $B$, and so includes dynamics not allowed in the trapping region. 

\begin{figure}
	\centering
    {\Large
    \scalebox{0.6}{
	\begin{tikzpicture}[very thick, every node/.style={sloped,allow upside down},scale=.5]
	\node[inner sep=2pt, circle, draw, fill, label={south west:$0000$}] (0000) at (0,0) {};
	\node[inner sep=2pt, circle, draw, fill, label={below:$0001$}] (0001) at (8,3) {};
	\node[inner sep=2pt, circle, draw, fill, label={left:$0010$}] (0010) at (7,9) {};
	\node[inner sep=2pt, circle, draw, fill, label={north east :$0011$}] (0011) at (11,6) {};
	\node[inner sep=2pt, circle, draw, fill, label={west:$0100$}] (0100) at (0,20) {};
	\node[inner sep=2pt, circle, draw, fill, label={west:$0101$}] (0101) at (8,11) {};
	\node[inner sep=2pt, circle, draw, fill, label={west:$0110$}] (0110) at (7,29) {};
	\node[inner sep=2pt, circle, draw, fill, label={north east:$0111$}] (0111) at (11,14) {};
	\node[inner sep=2pt, circle, draw, fill, label={east:$1010$}] (1010) at (27,9) {};
	\node[inner sep=2pt, circle, draw, fill, label={east:$1110$}] (1110) at (27,29) {};
	\node[inner sep=2pt, circle, draw, fill, label={east:$1111$}] (1111) at (19,14) {};
	\draw (0001) --node {\midarrow} (0000);
	\draw (0010) --node {\midarrow} (0000);
	\draw (0000) --node {\midarrow} (0100);
	\draw (0100) --node {\midarrow} (0110);
	\draw (0100) --node {\midarrow} (0101);	
	\draw (0110) --node {\midarrow} (0111);
	\draw (0101) --node {\midarrow} (0111);
	\draw (0111) --node {\midarrow} (1111);
	\draw (0011) --node {\midarrow} (0001);
	\draw (0010) --node {\midarrow} (0011);
	\draw (1010) --node {\midarrow} (0010);
	\draw (1110) --node {\midarrow} (1010);
	\draw (1111) --node {\midarrow} (1110);
	\end{tikzpicture}
    }
    }
	\caption{$\TG_r$ for the example Glass network in Equation~\eqref{eq:example} with only the trapping region.}
   	\label{fig:1TGR}
\end{figure}
\begin{figure}
    \centering
    {\Large
    \scalebox{0.6}{
    \begin{tikzpicture}[very thick, every node/.style={sloped,allow upside down},scale=.5]
    \node[inner sep=2pt, circle, draw, fill, label={north west:$0000_A$}] (0000A) at (0,0) {};
    \node[inner sep=2pt, circle, draw, fill, label={north west:$0010_A$}] (0010A) at (7,9) {};
    \node[inner sep=2pt, circle, draw, fill, label={north west:$1010_A$}] (1010A) at (27,9) {};
    \node[inner sep=2pt, circle, draw, fill, label={north west:$0100_A$}] (0100A) at (0,20) {};
    \node[inner sep=2pt, circle, draw, fill, label={north west:$0110_A$}] (0110A) at (7,29) {};
    \node[inner sep=2pt, circle, draw, fill, label={north east:$1110_A$}] (1110A) at (27,28.5) {};
    \node[inner sep=2pt, circle, draw, fill, label={north west:$0001_B$}] (0001B) at (8,3) {};
    \node[inner sep=2pt, circle, draw, fill, label={east:$0011_B$}] (0011B) at (11,6) {};
    \node[inner sep=2pt, circle, draw, fill, label={east:$0101_B$}] (0101B) at (8,11) {};
    \node[inner sep=2pt, circle, draw, fill, label={north west:$0111_B$}] (0111B) at (11,14) {};
    \node[inner sep=2pt, circle, draw, fill, label={north west:$1111_B$}] (1111B) at (19,14) {};
    \node[inner sep=2pt, circle, draw, fill, label={south east:$0000_B$}] (0000B) at (1.5,0) {};
    \node[inner sep=2pt, circle, draw, fill, label={east:$0100_B$}] (0100B) at (1.5,20) {};
    \node[inner sep=2pt, circle, draw, fill, label={south:$0010_B$}] (0010B) at (8,8) {};
    \node[inner sep=2pt, circle, draw, fill, label={south west:$1010_B$}] (1010B) at (29,8) {};
    \node[inner sep=2pt, circle, draw, fill, label={east:$1110_B$}] (1110B) at (29,34) {};
    \node[inner sep=2pt, circle, draw, fill, label={north west:$1111_A$}] (1111A) at (19,16) {};
    \node[inner sep=2pt, circle, draw, fill, label={north east:$0111_A$}] (0111A) at (11,16) {};
    \draw (0000A) --node {\midarrow} (0100A);
    \draw (0000B) --node {\midarrow} (0100B);
    \draw (0100A) --node {\midarrow} (0110A);
    \draw (0110A) --node {\midarrow} (0111A);
    \draw (0111A) --node {\midarrow} (1111A);
    \draw (1111A) --node {\midarrow} (1110B);
    \draw (1110B) --node {\midarrow} (1010B);
    \draw (1010B) --node {\midarrow} (0010B);
    \draw (0010B) --node {\midarrow} (0011B);
    \draw (0011B) --node {\midarrow} (0001B);
    \draw (0001B) --node {\midarrow} (0000B);
    \draw (0100B) --node {\midarrow} (0101B);
    \draw (0101B) --node {\midarrow} (0111B);
    \draw (0111B) --node {\midarrow} (1111B);
    \draw (1111B) --node {\midarrow} (1110A);
    \draw (1110A) --node {\midarrow} (1010A);
    \draw (1010A) --node {\midarrow} (0010A);
    \draw (0010A) --node {\midarrow} (0000A);
    \draw (1111B) --node {\midarrow} (1110B);
    \draw (1111A) --node {\midarrow} (1110A);
    \end{tikzpicture}
    }
    }
    \caption{$\TG_{r}(1)$ for the example Glass network in Equation~\eqref{eq:example} where cycles $A$ and $B$ have been separated.}
    \label{fig:GNTGSRCD}
\end{figure}

As described in Section~\ref{sec:improved_bound} therefore, we construct a $\TG_r(1)$ in which cycles $A$ and $B$ have been separated from each other. This is shown in Figure~\ref{fig:GNTGSRCD} and its entropy is $h_{\TG_{r}(1)}\approx0.111$. This is less than $h_{\TG_r}$ because the two impossible cycles have been excluded.
\begin{figure}
    \centering
    \includegraphics[scale=.28]{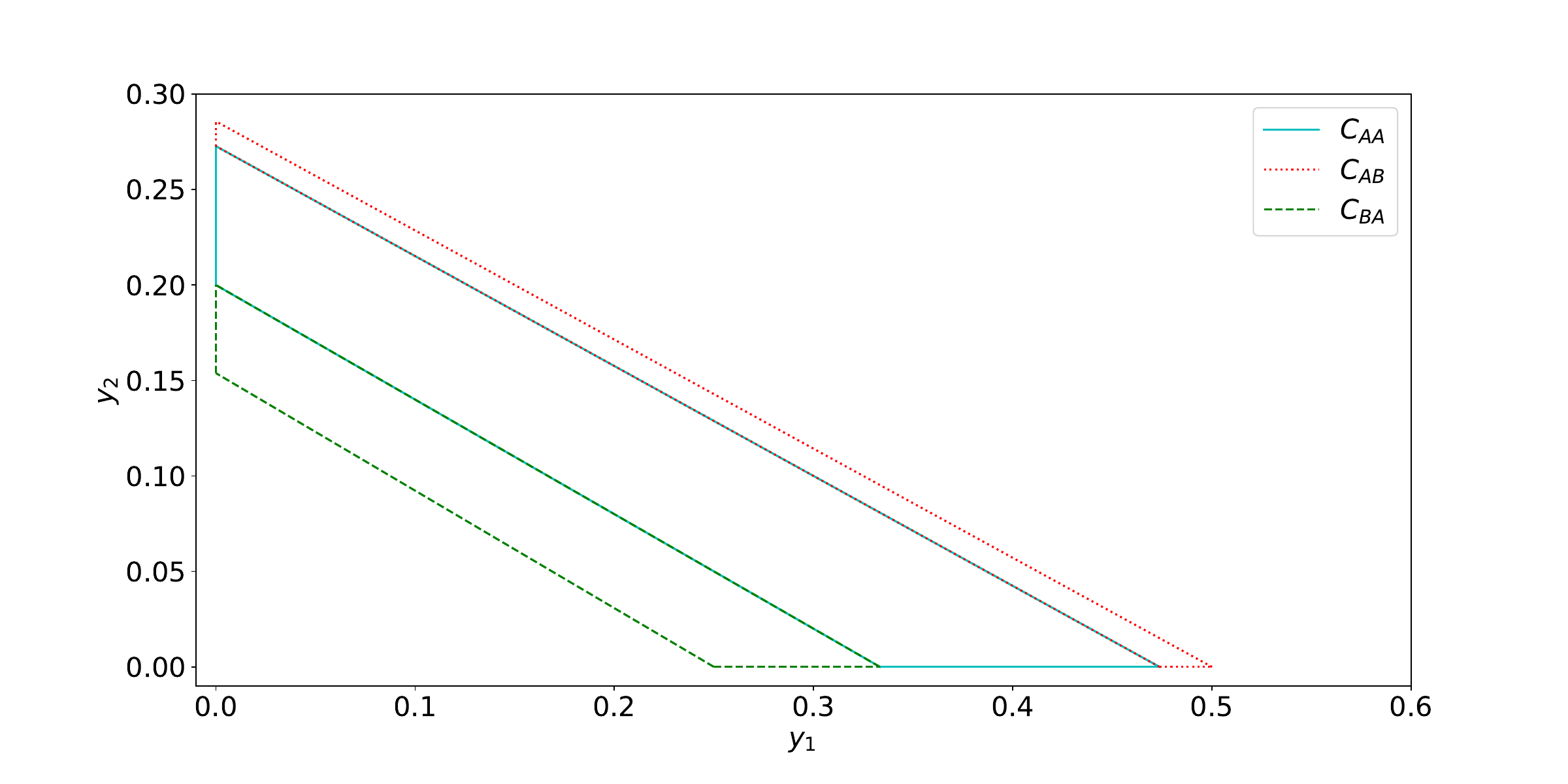}
    \caption{Returning Cones for $AA$, $AB$, $BA$, and $BB$, partitioning the returning cones in Figure~\ref{fig:RCCS}.}
    \label{fig:AAABBARC}
\end{figure}
Additionally, it can be seen in Figure~\ref{fig:RCCS} that the cycle sequence $BB$ is forbidden, since $M_B(C_B)\cap C_B=\emptyset$. In other words, after a circuit of cycle $B$, the next cycle must be $A$, since $M_B(C_B)\subset C_A$. We can also deduce this information by considering the returning cones for each of the length two concatenations. 
These are depicted in Figure~\ref{fig:AAABBARC} and the partitioning of the original returning cones is clear. Concatenation $BB$ is missing from the figure since its returning cone is empty. 
\begin{figure}
    \centering
    {\Large
    \scalebox{0.6}{
    \begin{tikzpicture}[very thick, every node/.style={sloped,allow upside down},scale=.5]
    \node[inner sep=2pt, circle, draw, fill, label={west :$0000_{AB}$}] (0000AB) at (0  , 0) {};
    \node[inner sep=2pt, circle, draw, fill, label={south:$0000_{AA}$}] (0000AA) at (1.5  , 0) {};
    \node[inner sep=2pt, circle, draw, fill, label={south east:$0000_{BA}$}] (0000BA) at (3  , 0) {};
    \node[inner sep=2pt, circle, draw, fill, label={north west:$0100_{AB}$}] (0100AB) at (0, 20) {};
    \node[inner sep=2pt, circle, draw, fill, label={south:$0100_{AA}$}] (0100AA) at (1.5, 20) {};
    \node[inner sep=2pt, circle, draw, fill, label={east:$0100_{BA}$}] (0100BA) at (3, 20) {};
    \node[inner sep=2pt, circle, draw, fill, label={north west:$0110_{AA}$}] (0110AA) at (8, 29) {};
    \node[inner sep=2pt, circle, draw, fill, label={north west:$0110_{AB}$}] (0110AB) at (9, 32) {};
    \node[inner sep=2pt, circle, draw, fill, label={east:$0101_{BA}$}] (0101B) at (9,11) {};
    \node[inner sep=2pt, circle, draw, fill, label={west:$0010_{AA}$}] (0010AA) at (8, 9) {};
    \node[inner sep=2pt, circle, draw, fill, label={south :$0010_{BA}$}] (0010BA) at (9, 8) {};
    \node[inner sep=2pt, circle, draw, fill, label={north west:$0010_{AB}$}] (0010AB) at (7, 10) {};
    \node[inner sep=2pt, circle, draw, fill, label={north west:$0001_{BA}$}] (0001BA) at (10,3) {};
    \node[inner sep=2pt, circle, draw, fill, label={east:$0011_{BA}$}] (0011BA) at (13,6) {};
    \node[inner sep=2pt, circle, draw, fill, label={west:$0111_{AA}$}] (0111AA) at (11,16) {};
    \node[inner sep=2pt, circle, draw, fill, label={north east:$0111_{AB}$}] (0111AB) at (11,17) {};
    \node[inner sep=2pt, circle, draw, fill, label={south east:$0111_{BA}$}] (0111BA) at (11,15) {};
    \node[inner sep=2pt, circle, draw, fill, label={east:$1111_{AA}$}] (1111AA) at (20,16) {};
    \node[inner sep=2pt, circle, draw, fill, label={north west:$1111_{AB}$}] (1111AB) at (20,17) {};
    \node[inner sep=2pt, circle, draw, fill, label={south:$1111_{BA}$}] (1111BA) at (20,15) {};
    \node[inner sep=2pt, circle, draw, fill, label={north west:$1110_{AA}$}] (1110AA) at (28, 28.5) {};
    \node[inner sep=2pt, circle, draw, fill, label={north west:$1110_{BA}$}] (1110BA) at (29, 31) {};
    \node[inner sep=2pt, circle, draw, fill, label={north west:$1110_{AB}$}] (1110AB) at (27, 26) {};
    \node[inner sep=2pt, circle, draw, fill, label={north east:$1010_{AA}$}] (1010AA) at (28, 9) {};
    \node[inner sep=2pt, circle, draw, fill, label={north west:$1010_{AB}$}] (1010AB) at (27, 10) {};
    \node[inner sep=2pt, circle, draw, fill, label={south :$1010_{BA}$}] (1010BA) at (29, 8) {};
    \draw (0000AB) --node {\midarrow} (0100AB);
    \draw (0000AA) --node {\midarrow} (0100AA);
    \draw (0000BA) --node {\midarrow} (0100BA);
    \draw (0100AB) --node {\midarrow} (0110AB);
    \draw (0100AA) --node {\midarrow} (0110AA);
    \draw (0100BA) --node {\midarrow} (0101B);
    \draw (0101B) --node {\midarrow} (0111BA);
    \draw (0110AB) --node {\midarrow} (0111AB);
    \draw (0110AA) --node {\midarrow} (0111AA);
    \draw (0111BA) --node {\midarrow} (1111BA);
    \draw (0111AB) --node {\midarrow} (1111AB);
    \draw (0111AA) --node {\midarrow} (1111AA);
    \draw (0010AA) --node {\midarrow} (0000AA);
    \draw (0010AB) --node {\midarrow} (0000AB);
    \draw (0010BA) --node {\midarrow} (0011BA);
    \draw (0011BA) --node {\midarrow} (0001BA);
    \draw (0001BA) --node {\midarrow} (0000BA);
    \draw (1110AB) --node {\midarrow} (1010AB);
    \draw (1110AA) --node {\midarrow} (1010AA);
    \draw (1110BA) --node {\midarrow} (1010BA);
    \draw (1010BA) --node {\midarrow} (0010BA);
    \draw (1010AA) --node {\midarrow} (0010AA);
    \draw (1010AB) --node {\midarrow} (0010AB);
    \draw (1111AB) --node {\midarrow} (1110BA);
    \draw (1111AA) --node {\midarrow} (1110AA);
    \draw (1111BA) --node {\midarrow} (1110AB);
    \draw (1111BA) --node {\midarrow} (1110AA);
    \draw (1111AA) --node {\midarrow} (1110AB);
    \end{tikzpicture}
    }
    }
    \caption{$\TG_{r}(2)$ for the example Glass network in Equation~\eqref{eq:example}. }
    \label{fig:TGSR2}
\end{figure}
We can then create copies of cycle $A$ and $B$ that correspond to the three concatenations with non-empty returning cones. The cycle left out is of course $BB$. Constructing the $\TG_r(2)$ as described previously gives the graph in Figure~\ref{fig:TGSR2}. The entropy of this graph is $h\approx 0.0813$. 

It is clear from entropy estimates provided by the graph representations in Figures~\ref{fig:1TGR}, \ref{fig:GNTGSRCD}, and~\ref{fig:TGSR2} that our graph refinements quickly improve upon the original entropy bound of Farcot~\cite{Farcot2006}, without much extra work. An alternative justification involving state-splitting for the cycle separation procedure is given in the Appendix.

\section{Numerical Estimation} \label{sec:numerics}
Here, we numerically simulate our example network and extract the number of blocks from long trajectories to get estimates of the entropy as a check on the results of our refinements above. Numerical integration is done here simply by computing the wall-to-wall maps as a discrete process from a given initial point on the starting wall.

\subsection{Have we found all of the blocks?}
It is reasonable to expect that if we simulate many trajectories from different random initial conditions, we may be able to generate all the elements of $\mathcal{B}_n(\overline{\phi(\mathscr{D}_{TR})})$ for reasonably large $n$. This is easy to verify for small $n$ since it is simple to calculate all the returning regions and the necessary trajectories are short. 

We experimented with blocks of length $n = 50$, where $10^5$ steps (wall-to-wall transitions) seemed to generate most of the blocks (the count appeared to have stopped increasing), but a longer simulation showed that after about $10^8$ steps there was another jump in the number of blocks. This is likely due to a few blocks of length $50$ having very narrow returning cones and thus not occurring often. Increasing the trajectory length did not cause any more increases up to $10^9$ steps. Of course, we do not know for certain if there are additional blocks of length $50$ or not, without calculating the returning regions of each possible block of length $50$ and checking to see which are empty, but for large $n$ that exhaustive check is computationally prohibitive. This experiment suggested that $10^9$ steps might be sufficient to get a good estimate of entropy for $n$ somewhat larger than $50$. 

\begin{figure}
    \centering
    \includegraphics[scale = 0.32]{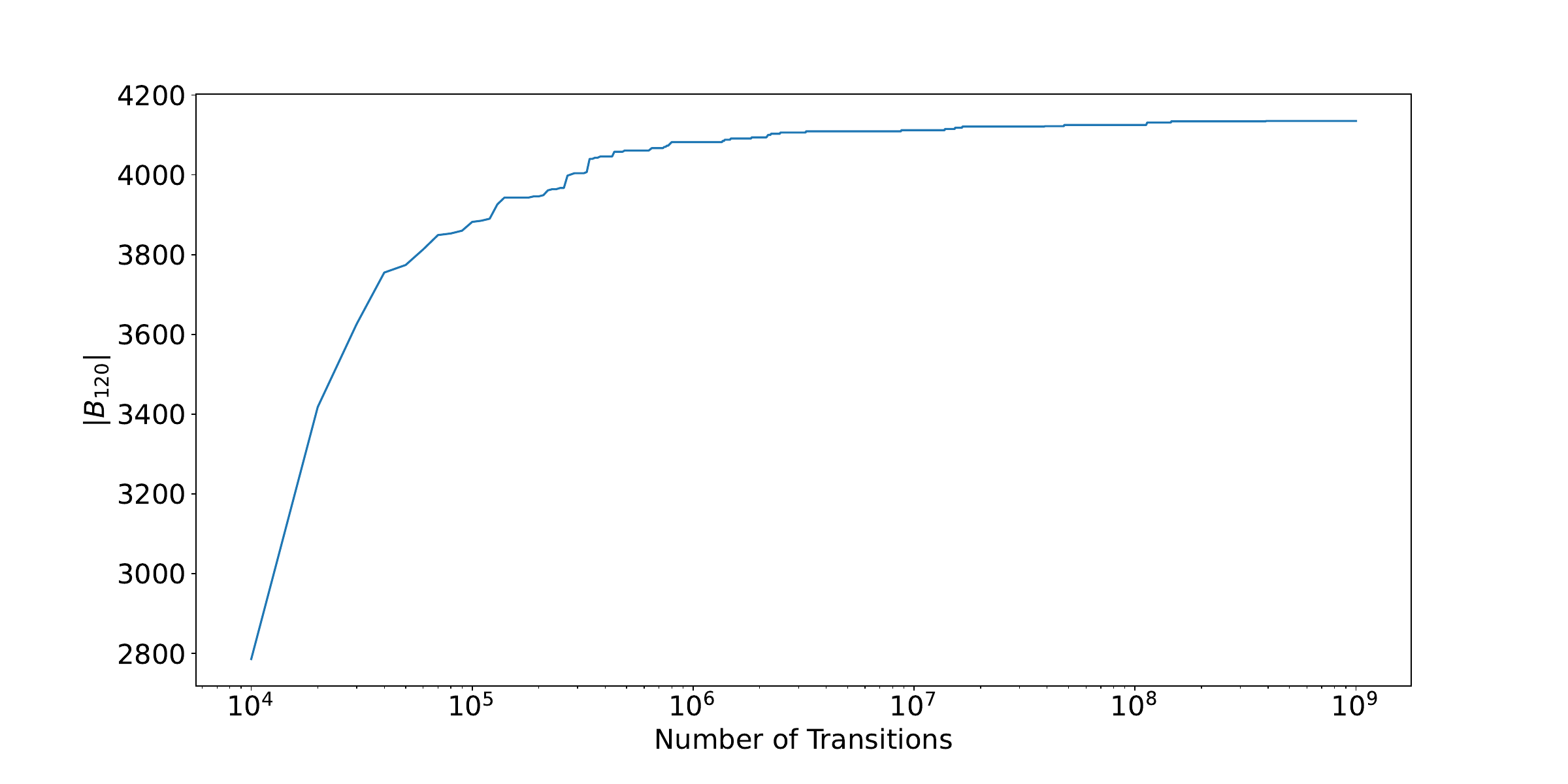}
    \caption{Blocks of length 120 vs number of transitions up to $10^9$}
    \label{fig:B120vstrajlng}
\end{figure}

In Figure~\ref{fig:B120vstrajlng} we plot the number of blocks of length $120$ found in simulations of length up to $10^9$ transitions (with the number of transitions plotted on a logarithmic scale). It is clear that the number of blocks continues to increase until about $10^8$ steps. There are additional small jumps before about $1.5\times 10^8$, but no further visible increases from there until $10^9$ steps. 
If additional increases occur for larger $n$, their impact on entropy should be small, since that involves taking the logarithm of the number of blocks and dividing by $n$. Thus, we expect that for $n\leq 120$, $10^9$ transitions will give us a reasonably tight lower bound on the number of blocks. If the system is chaotic, then the number of blocks of length $n$ continues to increase with $n$, and we can only estimate the number of blocks up to some finite $n$. The count of blocks of some particular (large) length $n$ from a long simulation gives a lower bound on $\mathcal{B}_n(\overline{\phi(\mathscr{D}_{TR})})$ and hence on $\log(\mathcal{B}_n(\overline{\phi(\mathscr{D}_{TR})}))$ for that particular $n$ and for any larger values of $n$, but not necessarily on the entropy, since $\frac 1n \log(\mathcal{B}_n(\overline{\phi(\mathscr{D}_{TR})}))$ may still decrease as $n$ increases as the effects of possible longer forbidden blocks become significant. Thus, our lower bound on the number of blocks for a specific large $n$ does not give a rigorous lower bound for the entropy.

One question that should be addressed is ``what if this is just a very complicated limit cycle?''. For our example it has been proven that there is no stable limit cycle~\cite{Edwards2001}. However, in general this is something that needs to be considered. It has been shown that in example networks with only $4$ variables, there can be surprisingly long stable limit cycles: examples with stable limit cycles of length 174 and 252 transitions have been identified~\cite{Edwards2000}. Without knowing of the existence of such long stable limit cycles ahead of time, numerical simulations would need to be long enough to identify that the number of blocks stops increasing at the length of the cycle. However, our method of upper bounds would eventually catch this, if refinement was carried far enough. If there exists a stable limit cycle involving multiple returns to a starting wall, the graph would eventually reduce to a single long loop without any branching, and which crossed the starting wall multiple times. This structure always has an entropy of 0 and hence would make numerical simulation irrelevant.

\subsection{Numerical (non-rigorous) bound on entropy} 
For a shift space $X$ with nonzero finite entropy, $\mathcal{B}_n(X)$ grows approximately exponentially. For our example, it is reasonable to assume that for sufficiently large $n$, $|\mathcal{B}_n(\overline{\phi(\mathscr{D}_{TR})})|\approx a\cdot b^{n}$, where $a$ and $b$ are positive real constants. Hence,  $h_{\phi(\mathscr{D}_{TR})} \approx \log b $. Thus for chaotic systems, a plot of $\log |\mathcal{B}_n(\overline{\phi(\mathscr{D}_{TR})})|$ vs $n$ should have a linear trend, at least asymptotically. 

\begin{figure}
    \centering
    \includegraphics[scale = 0.32]{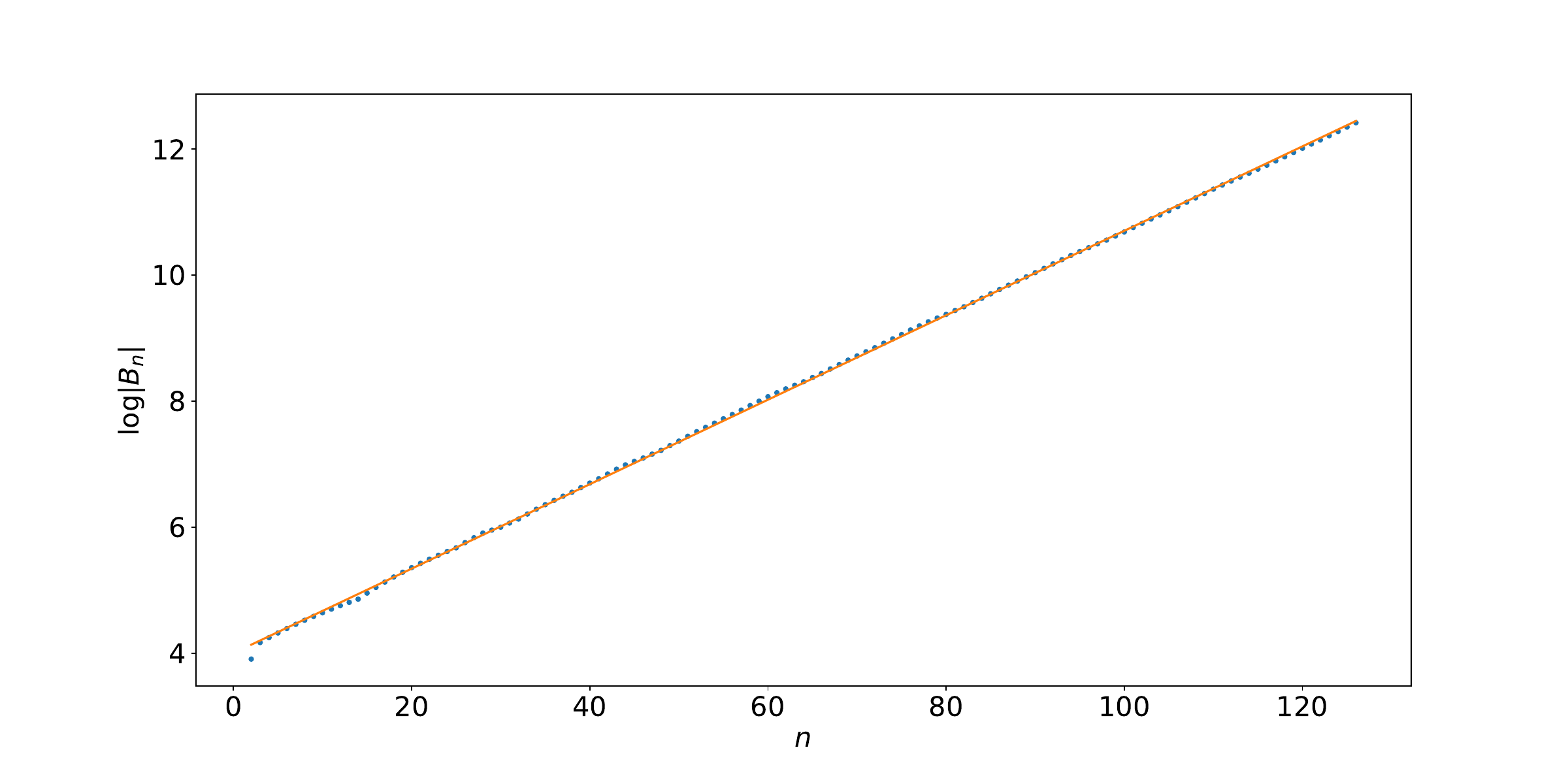}
    \caption{Logarithm (base 2) of number of blocks of length $n$ for $2\le n\le 126$ using a simulation of $10^9 $ transitions (dots) at each $n$. The solid line is the least squares best fit, which has slope $\approx 0.0670258$.}
    \label{fig:Eninterpplot}
\end{figure}

Figure~\ref{fig:Eninterpplot} plots $\log|\mathcal{B}_n(\overline{\phi(\mathscr{D}_{TR})})|$ against $n$ for our example, calculated from numerically generated trajectories. The slope of the best fit line (least squares) is $0.067025$, which gives an estimate of the entropy. Our second refinement from Figure~\ref{fig:TGSR2} has an entropy of approximately $0.081$. It is likely that there is a longer forbidden block that would only be found by refining further, so that $0.081$ is an over-estimate. Alternatively, it may be that there is a very small returning cone that is visited extremely rarely, and was missed by the numerical simulation. If this is the case, then the numerical estimate of $0.067$ underestimates entropy. It may also be that transients occur in the numerical simulations. In the theoretical (upper-bound) estimate, transients of some length are discovered as slightly longer sequences of cycles with an empty returning cone (see above). 
But if the numerical simulations include a transient, they will inflate the number of blocks, and potentially lead to an over-estimate of entropy. However, for a given trajectory, there can be at most one transient block of any given length $n$, so the effect on entropy is negligible for large $n$.

An observation from the numerical simulations is that the word $BAAB$ does not appear in any of the generated trajectories. However, its returning cone is nonempty. It may be that $BAAB$ is a rare sequence. The returning cone is very narrow and on the edge of the trapping region. Furthermore, the entropy of a representation that forbids $BAAB$ is $0.0706$, close to our numerical estimate of $0.067$, so our numerical simulation may have missed a rare occurrence and the true entropy may be closer to $0.081$. On the other hand, it may also be that every word of some length $n>4$ that includes $BAAB$ is forbidden. Given that we know this system is chaotic (or at least, aperiodic), the upper bound of $0.081$ may be sufficient. While $0.067$ is not a rigorous lower bound, it seems likely that the actual entropy is larger than this. The numerical estimate, $0.067$ and the theoretical upper bound, $0.081$, are significantly closer in value than any of the first three estimates of $0.873$, $0.224$, and $0.111$ from the $\TG$, $\TG_r$ and $\TG_{r}(1)$ respectively. Additionally, the upper bound $0.081$ was achieved with very little work. In general, the refinement process may require more effort, but the process is simple to implement.

\section{Conclusions}

We have shown how to improve on the result of Farcot~\cite{Farcot2006}, which allows for the entropy of the dynamics of a Glass network to be bounded above by that of a symbolic dynamical system based on the $\TG$. If one uses more information about the dynamics of such a network, by means of returning regions of cycles and trapping regions, one can construct discrete representations that more faithfully represent the dynamical possibilities, and thus give a tighter upper bound on entropy. The method uses structural changes to separate cycles in the $\TG$ and remove cycles that correspond to unrealizable trajectories, thus reducing the estimated entropy. The procedure can be taken to an arbitrary level of precision, giving a sequence of shift spaces that have decreasing entropy, and which we show approach the true entropy in the limit. 

It should be noted that one could, in principle, avoid our cycle-separation procedure, and simply identify forbidden sequences in the original alphabet that correspond to forbidden sequences of cycles, or even sequences of boxes that do not necessarily correspond to full cycles. This would avoid the need for defining larger and larger alphabets on the refined graphs. However, the information about the dynamics we have comes from returning regions for cycles and trapping regions on a starting wall, and this is the information we use to identify forbidden cycles or sequences of cycles, so it is natural to structure our symbolic dynamics around them. Additionally, the refined graphs have the advantage of allowing simple calculation of the entropy via the Perron eigenvalue of the graph's adjacency matrix. 

We have removed from consideration certain types of network structure that would make our procedure more difficult, in particular, networks in which one cannot avoid an infinite number of first-return cycles on any starting wall, or at least a potentially infinite number based on the $\TG$ alone (Condition~\ref{cond5}). 
It might be possible to extend our work to include such examples. However, the remaining class of networks is large and dynamically diverse, with many good candidates for designs of TRNGs. 

Indeed, one potential application of this work is to quantify entropy in free-running electronic circuits based on standard Boolean logic gates, which can be modelled as Glass networks and could serve as designs for TRNGs. Randomness in a TRNG comes mainly from thermal noise, but the strength of the idea of an underlying chaotic (deterministic) circuit design is that there is already positive entropy even without the thermal noise. The method proposed here allows this entropy to be estimated.

We have assumed throughout that we have a minimal trapping region. However, if there are multiple trapping regions in a given starting wall, we must deal with each minimal trapping region (and thus each attractor) separately in order to use the procedure detailed in Section~\ref{sec:improved_bound}.

It should be possible to automate our method for estimating entropy by an upper bound based on dynamical information that allows refinement of the state transition graph. Given a network structure, and no prior information about the dynamics, can one automatically detect a trapping region and extract a good upper bound on entropy? This is an idea for future work.

\section*{Acknowledgments} This work was partially supported by a Discovery Grant to RE from the Natural Sciences and Engineering Research Council (NSERC) of Canada, and a British Columbia Graduate Scholarship to BW. 

\section*{References}
\bibliographystyle{siamplain}	
\bibliography{References.bib}

\end{document}